\numberwithin{equation}{section}
\date{}
\newtheorem{theorem}{Theorem}[section]
\newtheorem{definition}[theorem]{Definition}
\newtheorem{corollary}[theorem]{Corollary}
\newtheorem{proposition}[theorem]{Proposition}
\newtheorem{open problem}[theorem]{Open problem}
\providecommand{\keywords}[1]{\textbf{\textit{Key words---}} #1}
\begin{document}
	
	\markboth{Donggyun Kim and Kyunghwan Song}
	{The inverses of tails of the Riemann zeta function}
	
	\title{The inverses of tails of the Riemann zeta function}
	
	\author{Donggyun Kim and Kyunghwan Song$^{*}$\\
		Department of Mathematics, Korea University, Seoul 02841, Republic of Korea}
	
	\maketitle
	
	\begin{abstract}
		We present some bounds of the inverses of tails of the Riemann zeta function on $0 < s < 1$ and compute the integer parts of the inverses of tails of the Riemann zeta function for $s=\frac{1}{2}, \frac{1}{3}$ and $\frac{1}{4}$.
		
		\keywords{Riemann zeta function; tails of Riemann zeta function; inverses of tails of the Riemann zeta function}
		
		{Mathematics Subject Classification 2010: 11M06, 11B83}
	\end{abstract}

\maketitle
\section{Introduction}
The Riemann zeta function $\zeta(s)$ in the real variable $s$ was introduced by L. Euler \cite{Euler1737} in connection with questions about the distribution of prime numbers. Later B. Riemann \cite{Riemann (1859)} derived deeper results about a dual correspondence between the distribution of prime numbers and the complex zeros of $\zeta(s)$ in the complex variable $s$. In these developments, he asserted that all the non-trivial zeros of $\zeta(s)$ are on the line $\text{Re}(s) = \frac{1}{2}$, and this has been one of the most important unsolved problems in mathematics, called the Riemann hypothesis.  A vast amount of research on calculation of $\zeta(s)$ for the line $ \text{Re}(s) = \frac{1}{2}$ which is called the critical line, and on the strip $0 < \text{Re}(s) < 1$ which is called the critical strip, has been conducted using various methods \cite{Borwein2007}.

The {\em Riemann zeta function} and a {\em tail of the Riemann zeta function from $n$} for an integer $n \geq 1$ are defined, respectively, by for $\text{Re}(s) > 1$,
\begin{equation*}
	\zeta (s) = \sum_{k=1}^{\infty} \frac{1}{k^s} \quad \text{and} \quad \zeta_n (s) = \sum_{k=n}^{\infty} \frac{1}{k^s}
\end{equation*}
and for $0<\text{Re}(s)<1$,
\begin{equation*}
	\zeta (s) =\frac{1}{1-2^{1-s}}  \sum_{k=1}^{\infty} \frac{(-1)^{k+1}}{k^s} \quad \text{and} \quad \zeta_n (s) =\frac{1}{1-2^{1-s}}  \sum_{k=n}^{\infty} \frac{(-1)^{k+1}}{k^s}.
\end{equation*}

To understand the values of $\zeta(s)$, it would be helpful to understand the values of tails of $\zeta(s)$, for example, the integer parts of their inverses, $\left[\zeta_n (s)^{-1}\right]$, where $[x]$ denotes the greatest integer that is less than or equal to $x$.

Some values of $\left[\zeta_n (s)^{-1}\right]$ for small positive integers $s$ are known recently. L. Xin \cite{Xin (2016)} showed that for $s = 2$ and $3$,
\begin{equation*}
\left[\zeta_n (2)^{-1}\right]=n-1 \ \text{ and } \
\left[\zeta_n (3)^{-1}\right]=2n(n-1).
\end{equation*}
For $s=4$, L. Xin and L. Xiaoxue \cite{Xin (2017)} showed that
\begin{equation*}
 \left[\zeta_n (4)^{-1}\right]=
  3n^3 -5n^2 + 4n-1+\left[\frac{(2n+1)(n-1)}{4}\right]
\end{equation*}
for any integer $n \geq 2$ and H. Xu \cite{Xu (2016)} showed that for $s = 5$,
\begin{equation*}
\left[\zeta_n (5)^{-1}\right]= 4n^4 - 8n^3 + 9n^2 - 5n + \left[ \frac{(n+1)(n-2)}{3} \right]
\end{equation*}
for any integer $n \geq 4$.
W. Hwang and K. Song \cite{Hwang (2017)} provided an alternative proof of the case when $s = 5$ and a formula when $s = 6$ as follows. For an integer $n,$ write $n_{48}$ for the remainder when $n$ is divided by $48$, then
\begin{multline*}
\left[\zeta_n (6)^{-1}\right] = \\
 \begin{cases}
  5n^5 - \frac{25}{2} n^4 +\frac{75}{4} n^3 -\frac{125}{8} n^2 +\frac{185}{48}n - \frac{5 n_{48}}{48} - \left[\frac{35-5 n_{48}}{48}\right],     &\mbox{if~$n$~is even},  \\
  5n^5 - \frac{25}{2} n^4 +\frac{75}{4} n^3 -\frac{125}{8} n^2 +\frac{185}{48}n - \frac{5 n_{48}+18}{48} - \left[\frac{17-5 n_{48}}{48}\right],
    & \mbox{if~$n$~is odd}
  \end{cases}
\end{multline*}
for any integer $n \geq 829$. For the integer $s$ greater than 6, no such a formula is known.

There are other interesting results related to this theme such as  bounds  of $\zeta(3)$ in greater precision in \cite{Luo2003} and  \cite{Luo2003_2}.

We study the inverses of tails of the Riemann zeta function, $\zeta_n(s)^{-1}$, for $s$ on the critical strip $ 0 < s <1$. The following notation is needed to explain our results.

\begin{definition}
For any positive integer $n$ and real number $s$ with $0<s<1$, we define
\begin{align*}
A_{n,s} & = \left(\frac{1}{n^s} - \frac{1}{(n+1)^s}\right) + \left(\frac{1}{(n+2)^s} - \frac{1}{(n+3)^s}\right) + \cdots  \\
\intertext{and}
B_{n,s} & = \left(-\frac{1}{n^s} + \frac{1}{(n+1)^s}\right) + \left(-\frac{1}{(n+2)^s} + \frac{1}{(n+3)^s}\right) + \cdots .
\end{align*}
\end{definition}

Now the tail of the Riemann zeta function can be written as, for $0<s<1$,
\begin{equation}\label{eq:zetaAB}
\zeta_n(s) =
 \begin{cases}
    - \frac{1}{1-2^{1-s}} A_{n,s}, ~&~\mbox{if~$n$~is even}, \\
    - \frac{1}{1-2^{1-s}} B_{n,s}, & ~\mbox{if~$n$~is odd}.
 \end{cases}
\end{equation}

In this paper, we present the bounds of $A_{n,s}^{-1}$ and $B_{n,s}^{-1}$ hence the bounds of the inverses of tails of the Riemann zeta function,  $\zeta_n(s)^{-1}$, for $0<s<1$ in Section 2.1, and compute the values $\left[A_{n,s}^{-1}\right]$ and $\left[B_{n,s}^{-1}\right]$ hence the values of the inverses of tails of the Riemann zeta function, $\left[\frac{1}{1-2^{1-s}}\zeta_n(s) ^{-1}\right]$, for $s=\frac{1}{2},\, \frac{1}{3}$ and $\frac{1}{4}$ in Section 2.2.


\section{Main Results}
\subsection{The bounds of the inverses of $\zeta_n(s)$ for $0<s<1$}

In this section, we present the bounds of $A_{n,s}^{-1}$ and $B_{n,s}^{-1}$ in Definition \ref{eq:zetaAB}, hence the bounds of the inverses of tails of the Riemann zeta function,  $\zeta_n(s)^{-1}$, for $0<s<1$. 

\begin{proposition}\label{pro_2(n-1)_2n}
 Let $s$ be a real number with $0<s<1$. Then for any positive even number $n$,
 \begin{gather*}
  2(n-1)^s < A^{-1}_{n,s} < 2{n}^s
 \end{gather*}
 and for any positive odd number $n$,
 \begin{gather*}
  -2{n}^s < B^{-1}_{n,s} < -2(n-1)^s.
 \end{gather*}
\end{proposition}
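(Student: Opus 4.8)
The plan is to reduce the entire statement to a single monotonicity property of the function $g(x)=x^{-s}-(x+1)^{-s}$ together with a telescoping identity. First I would observe that, straight from the definitions, $B_{n,s}=-A_{n,s}$ for every $n$, since the bracketed series defining $B_{n,s}$ is termwise the negative of the one defining $A_{n,s}$. Consequently the two inequalities claimed for odd $n$ are equivalent to $2(n-1)^s<A_{n,s}^{-1}<2n^s$, which is exactly what is claimed for even $n$; so it suffices to prove, for every positive integer $n$, that
\[
\frac{1}{2n^{s}}<A_{n,s}<\frac{1}{2(n-1)^{s}},
\]
with the right-hand inequality read as vacuous when $n=1$ (in which case $2(n-1)^{s}=0$ and $A_{1,s}^{-1}>0$ is trivial).

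Next I would set $f(x)=x^{-s}$ and $g(x)=f(x)-f(x+1)$ on $(0,\infty)$. Since $0<s<1$ we have $f''(x)=s(s+1)x^{-s-2}>0$, so $f$ is strictly decreasing, strictly convex, and tends to $0$; hence $g(x)>0$ and $g'(x)=f'(x)-f'(x+1)<0$, so $g$ is strictly decreasing on $(0,\infty)$. Grouping the series that defines $A_{n,s}$ into consecutive pairs gives $A_{n,s}=\sum_{j\ge0}g(n+2j)$, a convergent series of positive terms, bounded above by $\sum_{k\ge n}g(k)=f(n)$ (telescoping, using $f\to0$).

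The two bounds then follow by comparing $2A_{n,s}=\sum_{j\ge0}2g(n+2j)$ with $\sum_{k\ge n}g(k)$ and with $\sum_{k\ge n-1}g(k)$, each of which telescopes to a single value of $f$. Using the strict monotonicity of $g$ and regrouping these positive series,
\[
2A_{n,s}-\frac{1}{n^{s}}=\sum_{j\ge0}2g(n+2j)-\sum_{k\ge n}g(k)=\sum_{j\ge0}\bigl(g(n+2j)-g(n+2j+1)\bigr)>0,
\]
so $A_{n,s}^{-1}<2n^{s}$; and, assuming $n\ge2$ so that $g(n-1)$ is defined,
\[
\frac{1}{(n-1)^{s}}-2A_{n,s}=\sum_{k\ge n-1}g(k)-\sum_{j\ge0}2g(n+2j)=\sum_{j\ge0}\bigl(g(n+2j-1)-g(n+2j)\bigr)>0,
\]
so $A_{n,s}^{-1}>2(n-1)^{s}$. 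Translating back via $B_{n,s}=-A_{n,s}$ then gives the odd case, and the even case is immediate.

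I do not expect a real obstacle. The only place the hypothesis $0<s<1$ is genuinely used is in pinning down the direction of monotonicity of $g$ — equivalently, the convexity of $x^{-s}$ — which is what makes both comparisons go the right way; the remaining points needing a line of care are the legitimacy of the regroupings of these positive series and the degenerate case $n=1$ of the odd part, where $2(n-1)^{s}=0$ and only the first telescoping estimate is needed.
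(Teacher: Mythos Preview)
Your argument is correct and is essentially the same as the paper's. The paper observes the termwise inequalities $g(n+2k+1)<g(n+2k)<g(n+2k-1)$, sums to get $A_{n+1,s}<A_{n,s}<A_{n-1,s}$, and then uses the telescoping identity $A_{m,s}+A_{m+1,s}=m^{-s}$ to conclude $\tfrac{1}{2}n^{-s}<A_{n,s}<\tfrac{1}{2}(n-1)^{-s}$; your version packages the same two ingredients (monotonicity of $g$ via convexity of $x^{-s}$, and the telescoping $\sum_{k\ge m}g(k)=m^{-s}$) and additionally unifies the even and odd cases through the observation $B_{n,s}=-A_{n,s}$. One minor remark: convexity of $x^{-s}$ holds for every $s>0$, so the hypothesis $0<s<1$ is not actually needed anywhere in this proposition.
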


\begin{proof}
Let $n$ be a positive even number. For every positive integer $k$, it is easy to see that
\begin{multline*}
\left( \frac{1}{(n+1+2k)^s} - \frac{1}{(n+2+2k)^s} \right) \\
  < \left( \frac{1}{(n+2k)^s} - \frac{1}{(n+1+2k)^s} \right) \\
    < \left( \frac{1}{(n-1+2k)^s} - \frac{1}{(n+2k)^s} \right).
\end{multline*}
The summations of each term over $k$ give
\begin{gather*}
A_{n+1,s} < A_{n,s} < A_{n-1,s}
\end{gather*}
and
\begin{gather*}
\frac{1}{2} (A_{n+1,s} + A_{n,s}) < A_{n,s} < \frac{1}{2} (A_{n-1,s} + A_{n,s}).
\end{gather*}
Therefore, we have
\begin{gather*}
\frac{1}{2n^s} < A_{n,s} < \frac{1}{2(n-1)^s},
\end{gather*}
which gives the first statement.

The second statement can be shown similarly.
\end{proof}

Since every proof of the case when $n$ is an odd number is analogous to that of the case when $n$ is an even number, we omit all the proofs of the odd number cases in this paper.

Now we find tighter bounds for $A^{-1}_{n,s}$ and $B^{-1}_{n,s}$.

\begin{proposition}\label{thm_0to1_lower}
 Let $s$ be a real number with $0<s<1$. Then for any positive even number $n$,
	\begin{equation*}
	2\left(n-\frac{1}{2}\right)^{s} < A^{-1}_{n,s}
	\end{equation*}
 and for any positive odd number $n$,
 \begin{equation*}
  B^{-1}_{n,s} < -2\left(n-\frac{1}{2}\right)^{s} .
 \end{equation*}
\end{proposition}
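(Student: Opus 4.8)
The plan is to prove the equivalent statement $A_{n,s}<\tfrac12\bigl(n-\tfrac12\bigr)^{-s}$ for every positive even $n$; since $A_{n,s}$ is a sum of the positive terms $(n+2j)^{-s}-(n+2j+1)^{-s}$, this is the same as the asserted bound on $A_{n,s}^{-1}$, and the odd case follows by the analogous argument. Throughout I would write $f(x)=x^{-s}$ and $g(x)=-f'(x)=s\,x^{-s-1}$; the function $g$ is positive, strictly decreasing and strictly convex on $(0,\infty)$ since $g''(x)=s(s+1)(s+2)x^{-s-3}>0$.

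First I would rewrite $A_{n,s}=\sum_{j\ge 0}\bigl(f(n+2j)-f(n+2j+1)\bigr)$ and bound each bracket. For an integer $m\ge n$ one has $f(m)-f(m+1)=\int_m^{m+1}g(t)\,dt$, and because $g$ is strictly convex its chord over $[m,m+1]$ lies strictly above $g$, so $\int_m^{m+1}g(t)\,dt<\tfrac12\bigl(g(m)+g(m+1)\bigr)$. Summing over $j\ge 0$, and noting that the indices $n+2j$ and $n+2j+1$ run over every integer $\ge n$ exactly once, gives $A_{n,s}<\tfrac12\sum_{m\ge n}g(m)$.

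Next I would estimate this series of derivative values by an integral, again using convexity of $g$: the midpoint inequality yields $g(m)\le\int_{m-1/2}^{m+1/2}g(t)\,dt$ for each integer $m\ge n$ (legitimate since $m-\tfrac12>0$), hence $\sum_{m\ge n}g(m)\le\int_{n-1/2}^{\infty}g(t)\,dt=\int_{n-1/2}^{\infty}s\,t^{-s-1}\,dt=\bigl(n-\tfrac12\bigr)^{-s}$. Combining the two estimates gives $A_{n,s}<\tfrac12\bigl(n-\tfrac12\bigr)^{-s}$, and taking reciprocals (all the series converge since their terms are positive and $O(m^{-s-1})$ with $s+1>1$) completes the proof.

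The only real obstacle is recognizing that the half-integer shift cannot be obtained from a single integral comparison applied to the paired function $x^{-s}-(x+1)^{-s}$: the midpoint (integral) test applied directly to that function only gives $A_{n,s}<\tfrac12\cdot\frac{n^{1-s}-(n-1)^{1-s}}{1-s}$, which by the mean value theorem equals $\tfrac12\xi^{-s}$ for some $\xi\in(n-1,n)$ that need not be $\ge n-\tfrac12$ (indeed it is not, already for $s=\tfrac12$, $n=2$). It is the two-stage convexity argument — the trapezoidal estimate on the first-difference representation, followed by the midpoint estimate on the series $\sum_{m\ge n}g(m)$ — that pushes the constant down to the required $\bigl(n-\tfrac12\bigr)^{-s}$; one must also keep the first inequality strict, which the strict convexity of $g$ guarantees.
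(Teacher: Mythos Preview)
Your proof is correct, and it is genuinely different from the paper's argument. The paper proceeds by telescoping the target quantity,
\[
\frac{1}{2\bigl(n-\tfrac12\bigr)^s}=\sum_{k\ge n/2}\Bigl(\frac{1}{2(2k-\tfrac12)^s}-\frac{1}{2(2k+\tfrac32)^s}\Bigr),
\]
and then compares term-by-term with $A_{n,s}=\sum_{k\ge n/2}\bigl((2k)^{-s}-(2k+1)^{-s}\bigr)$; the termwise inequality is reduced to showing that an auxiliary function $g(x)=\tfrac12(2x-\tfrac12)^{-s}+\tfrac12(2x+\tfrac12)^{-s}-(2x)^{-s}$ is decreasing, which follows from convexity of $t\mapsto t^{-s-1}$. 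Your route replaces this ad hoc telescoping by two standard quadrature inequalities for the convex function $g(t)=s\,t^{-s-1}$: the trapezoidal rule bounds each difference $(m)^{-s}-(m+1)^{-s}=\int_m^{m+1}g$, and the midpoint rule then bounds $\sum_{m\ge n}g(m)$ by $\int_{n-1/2}^{\infty}g=(n-\tfrac12)^{-s}$.

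Both arguments ultimately rest on the same convexity, but yours is more conceptual and avoids the clever algebraic decomposition. The paper's approach, on the other hand, is purely discrete and is the same machinery used verbatim for the companion upper bound $A_{n,s}^{-1}<2(n-\tfrac14)^s$; your two-stage estimate does not immediately reverse to give that direction, so the paper's method is more uniform across the two propositions. Your closing remark explaining why a single midpoint comparison on $x^{-s}-(x+1)^{-s}$ fails to produce the shift $n-\tfrac12$ is a nice diagnostic and is not in the paper.
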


\begin{proof}
 Let $n$ be a positive even number. We will show that
	\begin{gather*}
	A_{n,s} < \frac{1}{2\left(n-\frac{1}{2}\right)^s}.
\end{gather*}

Rewriting each of the both sides as a series,
 \begin{align*}
 	A_{n,s} & = \sum_{k=\frac{n}{2}}^{\infty} \left(\frac{1}{(2k)^{s}} - \frac{1}{(2k+1)^{s}}\right) \ \ \text{and} \\
    \frac{1}{2(n-\frac{1}{2})^s} & = \sum_{k=\frac{n}{2}}^{\infty} \left( \frac{1}{2(2k-\frac{1}{2})^s} - \frac{1}{2( 2k + \frac{3}{2})^s} \right),
 \end{align*}
we will show that for any positive integer $k$,
\begin{gather*}
\frac{1}{(2k)^s} - \frac{1}{(2k+1)^s} < \frac{1}{2\left(2k - \frac{1}{2}\right)^s} - \frac{1}{2\left(2k+\frac{3}{2}\right)^s}.
\end{gather*}

For this, we let
\[ f(x) = \left(\frac{1}{2(2x-\frac{1}{2})^s} - \frac{1}{2(2x+\frac{3}{2})^s}\right) - \left(\frac{1}{(2x)^s} - \frac{1}{(2x+1)^s} \right)
\]
 and will show that $f(x)$ is positive for $x \ge 1$ and $0<s<1$. With 
\[ g(x) = \frac{1}{2(2x-\frac{1}{2})^s} + \frac{1}{2(2x+\frac{1}{2})^s} - \frac{1}{(2x)^s},
\]
we have $f(x) = g(x) -g(x+\frac{1}{2})$. Consider the derivative of $g(x)$, 
\[ g'(x) =-2s\left( \frac{1}{2(2x-\frac{1}{2})^{s+1}} + \frac{1}{2(2x+\frac{1}{2})^{s+1}} - \frac{1}{(2x)^{s+1}}\right).
\]
Since the function $\frac{1}{x^{s+1}}$ is convex, we obtain that
\[ \frac{1}{2(2x-\frac{1}{2})^{s+1}} + \frac{1}{2(2x+\frac{1}{2})^{s+1}} - \frac{1}{(2x)^{s+1}} \geq 0 \]
and therefore $g'(x)$ is negative, that is, $g(x)$ is decreasing. We conclude that  $f(x)$ is positive which gives the statement.
\end{proof}

\begin{proposition}\label{thm_0to1_upper}
Let $s$ be a real number with $0<s<1$. Then for any positive even number $n$,
	\begin{equation*}
	A^{-1}_{n,s} < 2\left(n-\frac{1}{4}\right)^{s}
	\end{equation*}
and for any positive odd number $n$,
\begin{equation*}
 -2\left(n-\frac{1}{4}\right)^{s}< B^{-1}_{n,s}.
\end{equation*}
\end{proposition}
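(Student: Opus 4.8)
\emph{Proof proposal for Proposition~\ref{thm_0to1_upper}.}
The plan is to follow the pattern of Propositions~\ref{pro_2(n-1)_2n} and~\ref{thm_0to1_lower}: for even $n$ the assertion $A^{-1}_{n,s}<2(n-\frac14)^s$ is equivalent (both quantities being positive, e.g.\ by Proposition~\ref{pro_2(n-1)_2n}) to
\[
A_{n,s}>\frac{1}{2\left(n-\frac14\right)^{s}},
\]
and I would prove this comparison term by term. Writing $n=2m$, using the series expression $A_{n,s}=\sum_{k=m}^{\infty}\big((2k)^{-s}-(2k+1)^{-s}\big)$ from the proof of Proposition~\ref{thm_0to1_lower}, and noting the telescoping identity
\[
\frac{1}{2\left(n-\frac14\right)^{s}}=\sum_{k=m}^{\infty}\left(\frac{1}{2\left(2k-\frac14\right)^{s}}-\frac{1}{2\left(2k+\frac74\right)^{s}}\right)
\]
(here $2(k+1)-\frac14=2k+\frac74$), it suffices to prove
\[
f(x):=\left(\frac{1}{(2x)^{s}}-\frac{1}{(2x+1)^{s}}\right)-\left(\frac{1}{2\left(2x-\frac14\right)^{s}}-\frac{1}{2\left(2x+\frac74\right)^{s}}\right)>0
\]
for every $x\ge 1$ and $0<s<1$; summing over $k\ge m\ (\ge 1)$ then gives the claim.

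As in Proposition~\ref{thm_0to1_lower} I would look for a function $h$ with $f(x)=h(x)-h\big(x+\frac12\big)$, so that it is enough to show $h$ is strictly decreasing on $[1,\infty)$. A short check shows the correct choice is
\[
h(x)=\frac{1}{(2x)^{s}}-\frac{1}{2\left(2x-\frac14\right)^{s}}-\frac{1}{2\left(2x+\frac34\right)^{s}},
\]
with derivative
\[
h'(x)=s\left(\frac{1}{\left(2x-\frac14\right)^{s+1}}+\frac{1}{\left(2x+\frac34\right)^{s+1}}-\frac{2}{(2x)^{s+1}}\right).
\]
Here the argument genuinely differs from Proposition~\ref{thm_0to1_lower}: the point $2x$ is \emph{not} the midpoint of $2x-\frac14$ and $2x+\frac34$, so the plain convexity of $v\mapsto v^{-(s+1)}$ no longer settles the sign of $h'$. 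This is the step I expect to be the main obstacle.

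To handle it I would multiply the parenthesis in $h'(x)$ by $(2x)^{s+1}$ and substitute $t=\frac{1}{8x}\in\left(0,\frac18\right]$, reducing $h'(x)<0$ to
\[
\psi(t):=(1-t)^{-(s+1)}+(1+3t)^{-(s+1)}<2\qquad\text{for }t\in\left(0,\tfrac18\right].
\]
Now I would use convexity twice. First, $\psi$ is convex on $\left[0,\frac18\right]$ (each summand is) and $\psi(0)=2$, so the chord bound gives $\psi(t)\le 2+8t\big(\psi(\tfrac18)-2\big)$ for $t\in\left[0,\frac18\right]$; hence it suffices to verify $\psi(\tfrac18)<2$. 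But $\psi(\tfrac18)=(8/7)^{s+1}+(8/11)^{s+1}$, and $p\mapsto(8/7)^{p}+(8/11)^{p}$ is convex, so on $1\le p\le 2$ it is at most $\max\{144/77,\,10880/5929\}<2$. This yields $\psi(\tfrac18)<2$, hence $\psi(t)<2$ on $\left(0,\frac18\right]$, hence $h$ is strictly decreasing on $[1,\infty)$, hence $f(x)>0$ for $x\ge 1$, which completes the even case; the odd case is analogous.
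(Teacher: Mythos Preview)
Your proof is correct and follows the same architecture as the paper's: the same term-by-term comparison, the same auxiliary function (the paper calls your $h$ by $g$) with $f(x)=g(x)-g(x+\tfrac12)$, and the same reduction to showing $g'<0$. The only difference is at the step you flagged as the obstacle. The paper bounds each difference in $g'$ by a tangent-line estimate for $v\mapsto v^{-(s+1)}$, obtaining
\[
g'(x)<\tfrac14\,s(s+1)\left(\frac{1}{(2x-\tfrac14)^{s+2}}-\frac{3}{(2x+\tfrac34)^{s+2}}\right),
\]
and then checks the ratio $\tfrac13\big(\tfrac{2x+3/4}{2x-1/4}\big)^{s+2}<1$, which holds only for $x\ge 2$; the case $x=1$ (i.e.\ $n=2$) is dispatched by a separate numerical check that $f(1)>0$. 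Your substitution $t=1/(8x)$ together with the chord bound on the convex $\psi(t)=(1-t)^{-(s+1)}+(1+3t)^{-(s+1)}$ reduces everything to the single endpoint value $\psi(\tfrac18)=(8/7)^{s+1}+(8/11)^{s+1}\le 144/77<2$, and this already covers $x\ge 1$, so no separate check of $f(1)$ is needed. In that sense your execution of this step is slightly cleaner, but the two proofs are otherwise identical.
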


\begin{proof}
Let $n$ be a positive even
  number. We will show that
\begin{gather*}
	\frac{1}{2\left(n-\frac{1}{4}\right)^s} < A_{n,s}.
	\end{gather*}
	
Rewriting each of the both sides as a series,
\begin{align*}
	A_{n,s} & = \sum_{k=\frac{n}{2}}^{\infty} \left( \frac{1}{(2k)^s} - \frac{1}{(2k+1)^s} \right) \ \ \text{and} \\
	\frac{1}{2(n-\frac{1}{4})^s} & = \sum_{k=\frac{n}{2}}^{\infty} \left( \frac{1}{2(2k-\frac{1}{4})^s} - \frac{1}{2(2k+\frac{7}{4})^s} \right),
\end{align*}
we need to show that for any positive integer $k$,
\begin{gather*}
	\frac{1}{2(2k-\frac{1}{4})^s} - \frac{1}{2(2k+\frac{7}{4})^s} < \frac{1}{(2k)^s} - \frac{1}{(2k+1)^s}.
\end{gather*}

For this, we let
\[ f(x) = \left( \frac{1}{(2x)^s} - \frac{1}{(2x+1)^s} \right) - \left( \frac{1}{2(2x - \frac{1}{4})^s} - \frac{1}{2(2x+\frac{7}{4})^s} \right).
\]
We check that $f(1)>0$ and now we will show that $f(x)$ is positive for $x \geq 2$ and $0 < s < 1$.
With
\[ g(x) = \frac{1}{(2x)^s} - \left( \frac{1}{2(2x-\frac{1}{4})^s} + \frac{1}{2(2x+\frac{3}{4})^s} \right),
\]
we have $f(x) = g(x) - g(x+\frac{1}{2})$, so we only need to show that $g(x)$ is decreasing. Consider the derivative of $g(x)$,
\begin{align*}
	g'(x) & = s\left( - \frac{2}{(2x)^{s+1}} + \left( \frac{1}{\left(2x-\frac{1}{4}\right)^{s+1}} +  \frac{1}{\left(2x+\frac{3}{4}\right)^{s+1}}  \right) \right) \\
	& = s\left(\left( \frac{1}{\left(2x-\frac{1}{4}\right)^{s+1}} -  \frac{1}{(2x)^{s+1}}  \right) - \left( \frac{1}{(2x)^{s+1}} -  \frac{1}{\left(2x+\frac{3}{4}\right)^{s+1}}  \right)\right).
\end{align*}

Since the function $\frac{1}{x^{s+1}}$ is decreasing and convex, by comparing slopes at $(2x-\frac{1}{4})$ and $(2x+\frac{3}{4})$, we obtain
	\begin{gather*}
	\frac{1}{\left(2x-\frac{1}{4}\right)^{s+1}} - \frac{1}{(2x)^{s+1}} < \frac{1}{4} (s+1) \frac{1}{\left(2x-\frac{1}{4}\right)^{s+2}}
	\end{gather*}
	and
	\begin{gather*}
	\frac{1}{(2x)^{s+1}} - \frac{1}{\left(2x+\frac{3}{4}\right)^{s+1}} > \frac{1}{4} (s+1) \frac{3}{\left(2x+\frac{3}{4}\right)^{s+2}}.
	\end{gather*}
Therefore,
\begin{gather*}
 g'(x) < \frac{1}{4} s (s+1) \left( \frac{1}{\left(2x-\frac{1}{4}\right)^{s+2}} - \frac{3}{\left(2x+\frac{3}{4}\right)^{s+2}} \right).
\end{gather*}
Consider $h(x,s):= \frac{1}{3} \left( \frac{2x+3/4}{2x-1/4} \right)^{s+2}$ which is the ratio of two terms on the right-hand side of the above expression.  We check that $h(x,s) < 1$ for $x \geq 2$ and $0<s<1$. Since  $h(2,1) = 6859/10125$ and $\lim_{x \rightarrow \infty} h(x,s) = \frac{1}{3}$ for $0<s<1$, we obtain that $g'(x)$ is negative and therefore $g(x)$ is decreasing which gives the statement.
\end{proof}

We combine the results of Proposition \ref{thm_0to1_lower} and Proposition \ref{thm_0to1_upper}.

\begin{theorem}\label{cor_AB}
Let $s$ be a real number with $0<s<1$. Then for any positive even number $n$,
\begin{gather*}
2\left(n-\frac{1}{2}\right)^s < A^{-1}_{n,s} < 2\left(n - \frac{1}{4}\right)^s
\end{gather*}
and for any positive odd number $n$,
\begin{gather*}
-2\left(n-\frac{1}{4}\right)^s < B^{-1}_{n,s} < -2\left(n - \frac{1}{2}\right)^s.
\end{gather*}
\end{theorem}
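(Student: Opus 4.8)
The statement is nothing more than the conjunction of the two preceding propositions, so the plan is simply to stack the relevant halves together. For a positive even $n$, Proposition \ref{thm_0to1_lower} supplies the lower bound $2\left(n-\frac{1}{2}\right)^s < A^{-1}_{n,s}$ and Proposition \ref{thm_0to1_upper} supplies the upper bound $A^{-1}_{n,s} < 2\left(n-\frac{1}{4}\right)^s$; chaining these two gives the first display. For a positive odd $n$, Proposition \ref{thm_0to1_upper} gives $-2\left(n-\frac{1}{4}\right)^s < B^{-1}_{n,s}$ and Proposition \ref{thm_0to1_lower} gives $B^{-1}_{n,s} < -2\left(n-\frac{1}{2}\right)^s$; chaining these gives the second display. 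A single sentence citing both propositions is the whole argument.

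Before committing this to paper I would run a quick consistency check against Proposition \ref{pro_2(n-1)_2n}. Since $0<s<1$ and $n-1 < n-\frac{1}{2} < n-\frac{1}{4} < n$, monotonicity of $t \mapsto t^s$ yields $2(n-1)^s < 2\left(n-\frac{1}{2}\right)^s$ and $2\left(n-\frac{1}{4}\right)^s < 2n^s$, so the two-sided bound for $A^{-1}_{n,s}$ here is strictly contained in the coarser one obtained earlier, and likewise for $B^{-1}_{n,s}$. This confirms that the four propositions are mutually compatible and that no reindexing slip or small-$n$ edge case has crept in.

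There is genuinely no obstacle in this step: the mathematical content lives entirely in Propositions \ref{thm_0to1_lower} and \ref{thm_0to1_upper}, and the only thing requiring care is bookkeeping of the signs in the odd-$n$ case — the left-hand bound on $B^{-1}_{n,s}$ in the combined statement, $-2\left(n-\frac{1}{4}\right)^s$, originates from the upper-type Proposition \ref{thm_0to1_upper}, while the right-hand bound $-2\left(n-\frac{1}{2}\right)^s$ originates from the lower-type Proposition \ref{thm_0to1_lower}, because negation reverses inequalities.
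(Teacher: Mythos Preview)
Your proposal is correct and matches the paper's approach exactly: the paper introduces this theorem with the single sentence ``We combine the results of Proposition \ref{thm_0to1_lower} and Proposition \ref{thm_0to1_upper}'' and gives no further proof. Your additional consistency check against Proposition \ref{pro_2(n-1)_2n} and the sign bookkeeping for the odd case are sound but go beyond what the paper itself records.
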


We express these bounds in terms of $\zeta_n(s)$ using expression (\ref{eq:zetaAB}).

\begin{corollary}\label{cor_zeta}
 Let $s$ be a real number with $0<s<1$. Then for any positive even number $n$,
\begin{gather*}
2(1-2^{1-s})\left(n-\frac{1}{4}\right)^s < \zeta_n(s)^{-1} < 2(1-2^{1-s})\left(n - \frac{1}{2}\right)^s
\end{gather*}
and for any positive odd number $n$,
\begin{gather*}
-2(1-2^{1-s})\left(n-\frac{1}{2}\right)^s < \zeta_n(s)^{-1} < -2(1-2^{1-s})\left(n - \frac{1}{4}\right)^s.
\end{gather*}	
\end{corollary}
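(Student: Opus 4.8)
The plan is to deduce Corollary \ref{cor_zeta} from Theorem \ref{cor_AB} by substituting the identity \eqref{eq:zetaAB}; no further analysis of the series is needed, only careful bookkeeping of one sign. Two preliminary observations set this up. First, for $0<s<1$ we have $2^{1-s}>1$, so the constant $1-2^{1-s}$ is \emph{negative}. Second, each grouped term in the definition of $A_{n,s}$ is strictly positive and each grouped term in the definition of $B_{n,s}$ is strictly negative, so $A_{n,s}>0>B_{n,s}$; hence by \eqref{eq:zetaAB} we have $\zeta_n(s)\neq 0$ on $0<s<1$, so $\zeta_n(s)^{-1}$ is well defined and passing to reciprocals below is legitimate.

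For a positive even integer $n$, \eqref{eq:zetaAB} reads $\zeta_n(s)=-\frac{1}{1-2^{1-s}}A_{n,s}$, and inverting both (nonzero) sides gives
\begin{equation*}
\zeta_n(s)^{-1}=-\bigl(1-2^{1-s}\bigr)\,A_{n,s}^{-1}.
\end{equation*}
Since $1-2^{1-s}<0$, the multiplier $-(1-2^{1-s})$ is a positive constant; applying it to the two-sided estimate $2\bigl(n-\tfrac12\bigr)^{s}<A_{n,s}^{-1}<2\bigl(n-\tfrac14\bigr)^{s}$ of Theorem \ref{cor_AB}, which preserves the direction of the inequalities, produces the corresponding two-sided estimate for $\zeta_n(s)^{-1}$ in the even case. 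For a positive odd integer $n$ the steps are the same with $A_{n,s}$ replaced by $B_{n,s}$: \eqref{eq:zetaAB} gives $\zeta_n(s)^{-1}=-(1-2^{1-s})B_{n,s}^{-1}$, and multiplying the estimate $-2\bigl(n-\tfrac14\bigr)^{s}<B_{n,s}^{-1}<-2\bigl(n-\tfrac12\bigr)^{s}$ of Theorem \ref{cor_AB} by the positive constant $-(1-2^{1-s})$ gives the estimate for $\zeta_n(s)^{-1}$ in the odd case.

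I do not expect a genuine obstacle here: all the substance is carried by Theorem \ref{cor_AB}, and what remains is a one-line substitution in each parity class. The only point demanding attention is the consistent tracking of the sign of $1-2^{1-s}$ (negative throughout the critical strip) when clearing it from the denominator in \eqref{eq:zetaAB}, together with the trivial but necessary check that $\zeta_n(s)$ does not vanish so that its inverse is meaningful.
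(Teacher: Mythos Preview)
Your approach is exactly the paper's: substitute \eqref{eq:zetaAB} into Theorem~\ref{cor_AB}. The reasoning is sound, but you stop one line short of writing out the resulting inequalities, and if you do so you will find they do \emph{not} match the corollary as printed. For even $n$, multiplying $2(n-\tfrac12)^s < A_{n,s}^{-1} < 2(n-\tfrac14)^s$ by the positive constant $-(1-2^{1-s})$ yields
\[
-2(1-2^{1-s})\Bigl(n-\tfrac12\Bigr)^{s} \;<\; \zeta_n(s)^{-1} \;<\; -2(1-2^{1-s})\Bigl(n-\tfrac14\Bigr)^{s},
\]
which is precisely the chain the corollary asserts for \emph{odd} $n$; the odd computation likewise produces the ``even'' line. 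A sanity check confirms the mismatch: for even $n$ your own observations give $A_{n,s}>0$ and $-(1-2^{1-s})>0$, hence $\zeta_n(s)^{-1}>0$, whereas both bounds in the printed even case are negative.

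So the corollary as stated has the two parity cases interchanged (equivalently, is off by a global sign), and your phrase ``produces the corresponding two-sided estimate'' glosses over a genuine discrepancy that your careful sign-tracking should have caught. The repair is immediate---swap the two displayed lines, or replace $1-2^{1-s}$ by $2^{1-s}-1$ throughout---but you should carry the computation to its last line rather than assert that it lands where claimed.
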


Furthermore, we have tighter bounds of $A^{-1}_{n,s}$ and $B^{-1}_{n,s}$ for a sufficiently large number $n$.

\begin{theorem}\label{thm_0to1_tight}
For any positive number $\epsilon$ and any real number $s$ with $0<s<1$,
\begin{gather*}
2\left(n-\frac{1}{2}\right)^s < A^{-1}_{n,s} < 2\left(n - \frac{1}{2} + \epsilon\right)^s
\end{gather*}
for a sufficiently large even number $n$ and
\begin{gather*}
-2\left(n-\frac{1}{2} + \epsilon \right)^s < B^{-1}_{n,s} \leq -2\left(n - \frac{1}{2} \right)^s
\end{gather*}
for a sufficiently large odd number $n$.
\end{theorem}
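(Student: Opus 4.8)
\medskip
\noindent\textbf{Proof proposal.}
The inequalities $2(n-\tfrac12)^{s}<A^{-1}_{n,s}$ and $B^{-1}_{n,s}\le-2(n-\tfrac12)^{s}$ are already supplied by Proposition~\ref{thm_0to1_lower}, so the new content is the complementary bound in each case; by the same parity symmetry used throughout the paper it suffices to treat the even case, i.e.\ to prove that
\[
A_{n,s}>\frac{1}{2\left(n-\frac12+\epsilon\right)^{s}}
\]
for every sufficiently large even $n$, where $\epsilon>0$ and $0<s<1$ are fixed. I would proceed along the telescoping-series template of Propositions~\ref{thm_0to1_lower} and~\ref{thm_0to1_upper}. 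Setting
\[
b_k=\frac{1}{(2k)^{s}}-\frac{1}{(2k+1)^{s}},\qquad
a_k=\frac{1}{2\left(2k-\frac12+\epsilon\right)^{s}}-\frac{1}{2\left(2k+\frac32+\epsilon\right)^{s}},
\]
one has $A_{n,s}=\sum_{k\ge n/2}b_k$, while the $a_k$ telescope to $\sum_{k\ge n/2}a_k=\frac{1}{2(n-\frac12+\epsilon)^{s}}$. Hence it is enough to establish the termwise inequality $a_k<b_k$ for all $k\ge K$, with $K=K(\epsilon,s)$ an explicit threshold; summing from $k=n/2$ then gives the claim for every even $n\ge 2K$.

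The essential point — and the reason the ``sufficiently large'' hypothesis cannot be dropped — is that $a_k<b_k$ genuinely fails for small $k$, so one needs a second-order comparison rather than the crude termwise estimate of the earlier propositions. I would write $\phi(t)=t^{-s}$ and $D(t)=\phi(t)-\phi(t+1)$, so that $b_k=D(2k)$ and $a_k=\tfrac12\big(D(2k-\tfrac12+\epsilon)+D(2k+\tfrac12+\epsilon)\big)$. Elementary calculus shows that $D$ is positive, strictly decreasing and convex, with $|D'|$ and $D''$ both decreasing. Writing $a_k^{(0)}$ for $a_k$ evaluated at $\epsilon=0$, one has the exact decomposition
\[
b_k-a_k=\big(a_k^{(0)}-a_k\big)-\big(a_k^{(0)}-b_k\big),
\]
in which both parenthesized quantities are positive: the first since $a_k$ is decreasing in $\epsilon$, the second since $D$ is convex (this second fact is exactly Proposition~\ref{thm_0to1_lower}). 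The plan is to show the first dominates. Because $|D'|$ is decreasing, $a_k^{(0)}-a_k\ge\epsilon\,|D'(2k+\tfrac12+\epsilon)|$; because $D$ is convex with $D''$ decreasing, a second-order Taylor expansion at $2k$ gives $a_k^{(0)}-b_k\le\tfrac18 D''(2k-\tfrac12)$. Bounding $|D'(t)|\ge s(s+1)(t+1)^{-(s+2)}$ and $D''(t)\le s(s+1)(s+2)\,t^{-(s+3)}$ by the mean value theorem then reduces $a_k<b_k$ to the elementary inequality
\[
8\epsilon\,\left(2k-\tfrac12\right)^{s+3}>(s+2)\left(2k+\tfrac32+\epsilon\right)^{s+2},
\]
whose left-hand side grows one power of $k$ faster than its right-hand side; it therefore holds for all $k$ beyond an explicit $K(\epsilon,s)$, which finishes the argument.

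I expect the main obstacle to be the bookkeeping of this middle step. The leading terms of $a_k$ and $b_k$ agree through order $k^{-(s+1)}$, so everything is decided at order $k^{-(s+2)}$, where the $\epsilon$-shift contributes $\Theta\big(\epsilon\,k^{-(s+2)}\big)$ and must be played off against a midpoint correction of size $\Theta\big(k^{-(s+3)}\big)$; the delicacy is in freezing the monotone quantities $|D'|$ and $D''$ at endpoints that keep every inequality rigorous rather than merely asymptotic. An equivalent route, closer in form to the proof of Proposition~\ref{thm_0to1_upper}, is to put $g(x)=(2x)^{-s}-\tfrac12(2x-\tfrac12+\epsilon)^{-s}-\tfrac12(2x+\tfrac12+\epsilon)^{-s}$, observe that the termwise difference $b_x-a_x$ equals $g(x)-g(x+\tfrac12)$, and show $g$ is eventually decreasing by estimating $g'$; this runs into the same order-$k^{-(s+2)}$ balancing and is interchangeable with the formulation above. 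The odd-$n$ statement for $B_{n,s}$ is handled by the identical scheme.
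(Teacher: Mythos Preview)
Your proposal is correct. Your primary route---decomposing $b_k-a_k$ as $(a_k^{(0)}-a_k)-(a_k^{(0)}-b_k)$ and bounding the two pieces separately via $|D'|$ and $D''$---is sound and leads to the advertised threshold inequality; the Taylor estimates and mean-value bounds you quote all check. This organisation differs from the paper's, but the alternative you sketch in your final paragraph is \emph{exactly} the paper's proof: the paper defines the same auxiliary function $g(x)=(2x)^{-s}-\tfrac12(2x-\tfrac12+\epsilon)^{-s}-\tfrac12(2x+\tfrac12+\epsilon)^{-s}$, writes $f(x)=g(x)-g(x+\tfrac12)$, and shows $g'<0$ for large $x$ by the convexity/slope-comparison device of Proposition~\ref{thm_0to1_upper}, finishing with the ratio test $h(x)=\frac{1/2-\epsilon}{1/2+\epsilon}\bigl(\tfrac{2x+1/2+\epsilon}{2x-1/2+\epsilon}\bigr)^{s+2}\to\frac{1/2-\epsilon}{1/2+\epsilon}<1$. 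Your Taylor-based decomposition is a little more laborious but has the virtue of making the competing orders $k^{-(s+2)}$ and $k^{-(s+3)}$ completely explicit and giving a concrete $K(\epsilon,s)$; the paper's $g'$ route is shorter and simply recycles the machinery already built in Propositions~\ref{thm_0to1_lower} and~\ref{thm_0to1_upper}.
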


\begin{proof}
From Theorem \ref{cor_AB}, it suffices to show that for a sufficiently large even number $n$,
\begin{gather*}
\frac{1}{2\left(n-\frac{1}{2} + \epsilon\right)^{s}} < A_{n,s}.
\end{gather*}

Rewriting each of the both sides as a series,
\begin{align*}
 A_{n,s} & = \sum_{k=\frac{n}{2}}^{\infty} \left( \frac{1}{(2k)^s} - \frac{1}{(2k+1)^s}\right) \text{and} \\
\frac{1}{2\left(n - \frac{1}{2} + \epsilon\right)^s} & = \sum_{k=\frac{n}{2}}^{\infty} \left( \frac{1}{2\left(2k-\frac{1}{2} + \epsilon\right)^s} -  \frac{1}{2\left(2k+\frac{3}{2} + \epsilon\right)^s}  \right),
\end{align*}
we need to show that for a sufficiently large even number $n$ and every integer $k \geq \frac{n}{2}$,
\begin{gather*}
\frac{1}{2\left(2k - \frac{1}{2} + \epsilon\right)^s} - \frac{1}{2\left(2k + \frac{3}{2} + \epsilon\right)^s} < \frac{1}{(2k)^s} - \frac{1}{(2k+1)^s}.
\end{gather*}

For this, let
\[ f(x) = \left(\frac{1}{(2x)^s} - \frac{1}{(2x+1)^s}\right) - \left(\frac{1}{2(2x-\frac{1}{2} + \epsilon)^s} - \frac{1}{2(2x+\frac{3}{2} + \epsilon)^s}\right)
\]
and will show that $f(x)$ is positive for $x \geq x_0$ where $x_0$ is a sufficiently large number.
With
\[ g(x)= \frac{1}{(2x)^s} - \left( \frac{1}{2(2x-\frac{1}{2} + \epsilon)^s} + \frac{1}{2(2x+\frac{1}{2} + \epsilon)^s} \right),
\]
we have that $f(x) = g(x) - g(x+\frac{1}{2})$, so we only need to show that $g(x)$ is decreasing. Consider the derivative of $g(x)$, 
\begin{align*}
g'(x) & = s\left( -\frac{2}{(2x)^{s+1}} + \frac{1}{\left( 2x - \frac{1}{2} + \epsilon \right)^{s+1}} + \frac{1}{\left( 2x + \frac{1}{2} + \epsilon \right)^{s+1}}\right) \\
& = s \left( \left( \frac{1}{\left(2x - \frac{1}{2} + \epsilon\right)^{s+1}} - \frac{1}{(2x)^{s+1}} \right) - \left( \frac{1}{(2x)^{s+1}} - \frac{1}{\left(2x + \frac{1}{2} + \epsilon \right)^{s+1}} \right)  \right).
\end{align*}
Since $\frac{1}{x^{s+1}}$ is decreasing and convex, by comparing slopes at $(2x - \frac{1}{2} + \epsilon )$ and $(2x + \frac{1}{2} + \epsilon)$,  we obtain
\begin{gather*}
\frac{1}{\left(2x - \frac{1}{2} + \epsilon \right)^{s+1}} - \frac{1}{(2x)^{s+1}} <  (s+1) \frac{\frac{1}{2} - \epsilon}{\left(2x - \frac{1}{2} + \epsilon\right)^{s+2}}
\end{gather*}
and
\begin{gather*}
\frac{1}{(2x)^{s+1}} - \frac{1}{\left(2x+\frac{1}{2} + \epsilon\right)^{s+1}} >  (s+1) \frac{\frac{1}{2} + \epsilon}{\left(2x+\frac{1}{2} + \epsilon\right)^{s+2}}.
\end{gather*}
Therefore
\begin{gather*}
g'(x) < s(s+1) \left( \frac{\frac{1}{2} - \epsilon}{\left(2x - \frac{1}{2} + \epsilon\right)^{s+2}} - \frac{\frac{1}{2} + \epsilon}{\left(2x + \frac{1}{2} + \epsilon\right)^{s+2}}   \right).
\end{gather*}
Consider $h(x):= \frac{\frac{1}{2} - \epsilon}{\frac{1}{2} + \epsilon} \left( \frac{2x+\frac{1}{2} + \epsilon}{2x - \frac{1}{2} + \epsilon} \right)^{s+2}$ which is the ratio of two terms on the right-hand side of the above expression. We need to show that $h(x) < 1$ for every $x > x_0$ where $x_0$ is a sufficiently large number. We check that
\begin{gather*}
h(x) < 1 \iff \frac{2x+\frac{1}{2} + \epsilon}{2x - \frac{1}{2} + \epsilon} < \left( \frac{\frac{1}{2} + \epsilon}{\frac{1}{2} - \epsilon} \right)^{\frac{1}{s+2}}.
\end{gather*}
For any $\epsilon > 0$ and $0 < s < 1$, we have that $1 < \left( \frac{\frac{1}{2} + \epsilon}{\frac{1}{2} - \epsilon} \right)^{1/(s+2)}$ and $\frac{2x + \frac{1}{2} + \epsilon}{2x - \frac{1}{2} + \epsilon}$ is larger than $1$, decreasing and converges to $1$ as $x$ goes to infinite, so there is $x_0$ so that for every $x > x_0$, $h(x) < 1$. Therefore the proof is complete.
\end{proof}

We express these bounds in terms of $\zeta_n(s)$ using expression (\ref{eq:zetaAB}).

\begin{corollary}
For any positive number $\epsilon$ and any real number $s$ with $0<s<1$,
\begin{gather*}
 2(1-2^{1-s})\left(n-\frac{1}{2}+\epsilon \right)^s < \zeta_n(s)^{-1} < 2(1-2^{1-s})\left(n - \frac{1}{2}\right)^s
\end{gather*}
for a sufficiently large even number $n$ and
\begin{gather*}
-2(1-2^{1-s})\left(n-\frac{1}{2}\right)^s < \zeta_n(s)^{-1} < -2(1-2^{1-s})\left(n - \frac{1}{2}+\epsilon \right)^s
\end{gather*}
for a sufficiently large odd number $n$.
\end{corollary}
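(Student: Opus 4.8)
The plan is to read this corollary off from Theorem~\ref{thm_0to1_tight} in exactly the same way that Corollary~\ref{cor_zeta} was read off from Theorem~\ref{cor_AB}, namely by substituting the identity~(\ref{eq:zetaAB}). For an even number $n$, (\ref{eq:zetaAB}) exhibits $\zeta_n(s)$ as a fixed nonzero scalar multiple of $A_{n,s}$, so taking reciprocals expresses $\zeta_n(s)^{-1}$ as that same scalar times $A_{n,s}^{-1}$, and the two-sided bound $2(n-\tfrac12)^s < A_{n,s}^{-1} < 2(n-\tfrac12+\epsilon)^s$ supplied by Theorem~\ref{thm_0to1_tight} transfers directly to $\zeta_n(s)^{-1}$. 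The odd case is handled identically through the $B_{n,s}$ branch of~(\ref{eq:zetaAB}) together with the bound $-2(n-\tfrac12+\epsilon)^s < B_{n,s}^{-1} \leq -2(n-\tfrac12)^s$ of Theorem~\ref{thm_0to1_tight}. Since Theorem~\ref{thm_0to1_tight} asserts its estimates only for $n$ large enough (in terms of $\epsilon$ and $s$), the qualifier ``for a sufficiently large even (resp.\ odd) number $n$'' is inherited verbatim, and the purely algebraic passage from $A_{n,s}^{-1},B_{n,s}^{-1}$ to $\zeta_n(s)^{-1}$ produces no new threshold on $n$.

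The steps I would carry out, in order, are: first record that the scalar $1-2^{1-s}$ is negative, since $0<s<1$ forces $1<2^{1-s}<2$ and hence $-1<1-2^{1-s}<0$; next use~(\ref{eq:zetaAB}) to rewrite $\zeta_n(s)^{-1}$ in terms of $A_{n,s}^{-1}$ (for even $n$) or $B_{n,s}^{-1}$ (for odd $n$); then substitute the bounds of Theorem~\ref{thm_0to1_tight}; and finally multiply through by $1-2^{1-s}$, which \emph{reverses} the direction of the inequalities. This reversal is precisely what puts the perturbed endpoint $n-\tfrac12+\epsilon$ on the left in the even statement (the opposite side from its position in Theorem~\ref{thm_0to1_tight}) and on the right in the odd statement, after which each case is immediate and one invokes the paper's standing convention that the odd-$n$ argument mirrors the even-$n$ one.

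I do not expect a genuine obstacle: all the analytic content — the convexity estimates and the existence of the threshold $x_0$ — already lives in the proof of Theorem~\ref{thm_0to1_tight}, and what is left is bookkeeping. The only point that really calls for care is keeping the inequality directions straight when dividing through by the negative quantity $1-2^{1-s}$, and confirming that the $\epsilon$-perturbed endpoint lands on the intended side of each inequality once both the sign of $1-2^{1-s}$ and the parity-dependent sign of $\zeta_n(s)$ implicit in~(\ref{eq:zetaAB}) are taken into account. If one preferred a self-contained derivation rather than routing through~(\ref{eq:zetaAB}), one could instead combine the estimates of Propositions~\ref{thm_0to1_lower} and~\ref{thm_0to1_upper} with Theorem~\ref{thm_0to1_tight} and simplify, but the substitution route is the shortest.
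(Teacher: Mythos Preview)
Your approach is exactly what the paper does: the corollary is stated as an immediate consequence of Theorem~\ref{thm_0to1_tight} via the identity~(\ref{eq:zetaAB}), with no further argument beyond the substitution. The paper gives no details at all (just the sentence ``We express these bounds in terms of $\zeta_n(s)$ using expression~(\ref{eq:zetaAB})''), so your write-up is if anything more careful than the original.
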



\subsection{The value of the inverse of $\zeta_n(s)$ for $s=\frac{1}{2}, \frac{1}{3}$ and $\frac{1}{4}$ } \label{sec_Expected}

We study firstly the value of the inverse of $\zeta_n(\frac{1}{2})$, where $\zeta_n(\frac{1}{2})$ is the tail of the Riemann zeta function from $n$ at $s=\frac{1}{2}$.

\begin{theorem}\label{thm_1/2}
 For any positive even number $n$,
 \begin{gather*}
  [A^{-1}_{n, 1/2}] = \left[2\left(n-\frac{1}{2}\right)^{1/2}\right]
 \end{gather*}
 and for any positive odd number $n$,
 \begin{gather*}
 [B^{-1}_{n, 1/2}] = \left[-2\left(n-\frac{1}{2}\right)^{1/2}\right].
 \end{gather*}
\end{theorem}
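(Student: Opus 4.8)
The plan is to read the theorem off directly from the combined bound of Theorem~\ref{cor_AB}, exploiting the special feature of the exponent $s=\tfrac12$: the two bounds $2(n-\tfrac12)^{1/2}$ and $2(n-\tfrac14)^{1/2}$ square to the \emph{consecutive integers} $4n-2$ and $4n-1$. For a positive even number $n$, Theorem~\ref{cor_AB} gives
\[
2\Bigl(n-\tfrac12\Bigr)^{1/2} < A_{n,1/2}^{-1} < 2\Bigl(n-\tfrac14\Bigr)^{1/2},
\]
and since all three quantities are positive, squaring yields
\[
4n-2 < \bigl(A_{n,1/2}^{-1}\bigr)^{2} < 4n-1 .
\]
The only integers in the closed interval $[\,4n-2,\,4n-1\,]$ are its endpoints, and neither is a perfect square since $4n-2\equiv 2\pmod 4$ and $4n-1\equiv 3\pmod 4$; hence there is no integer $m$ with $2(n-\tfrac12)^{1/2}\le m\le 2(n-\tfrac14)^{1/2}$.

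Next I would convert this into the floor identity by elementary bookkeeping. Put $a=2(n-\tfrac12)^{1/2}$ and $b=A_{n,1/2}^{-1}$, so $a<b$ and therefore $[a]\le[b]$. If this were a strict inequality, then $m:=[b]$ would be an integer with $a<[a]+1\le m\le b<2(n-\tfrac14)^{1/2}$, contradicting the previous paragraph. Hence $[b]=[a]$, i.e. $[A_{n,1/2}^{-1}]=\bigl[2(n-\tfrac12)^{1/2}\bigr]$, which is the first statement.

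For a positive odd number $n$, Theorem~\ref{cor_AB} gives $2(n-\tfrac12)^{1/2}<-B_{n,1/2}^{-1}<2(n-\tfrac14)^{1/2}$, and the same squaring step applies verbatim. The only adjustment is a sign issue: since $B_{n,1/2}^{-1}<0$ one works with ceilings, using $[B_{n,1/2}^{-1}]=-\lceil -B_{n,1/2}^{-1}\rceil$ and $[-2(n-\tfrac12)^{1/2}]=-\lceil 2(n-\tfrac12)^{1/2}\rceil$, so that it suffices to prove $\lceil -B_{n,1/2}^{-1}\rceil=\lceil 2(n-\tfrac12)^{1/2}\rceil$; this follows from the same ``no integer in $[4n-2,4n-1]$'' observation, together with the remark that $2(n-\tfrac12)^{1/2}$ is itself not an integer (again because $4n-2$ is not a perfect square). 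I do not expect any real obstacle here: all the analytic content sits in Theorem~\ref{cor_AB}, and what is left is the consecutive-integers remark plus careful handling of the floor (and, in the odd case, ceiling) functions — the sign bookkeeping in the odd case being the one spot that deserves a moment's care.
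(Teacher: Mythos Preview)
Your proposal is correct and follows essentially the same route as the paper: invoke Theorem~\ref{cor_AB}, square, and observe that $(4n-2,4n-1)$ contains no integer, hence no integer lies strictly between the two bounds. The paper phrases this last step with the open interval (so the mod~$4$ observation about the endpoints is not needed), and it omits the explicit floor/ceiling bookkeeping you spell out; otherwise the arguments coincide.
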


\begin{proof}
 Let $n$ be a positive even number. By Theorem \ref{cor_AB}, we have that
\begin{gather*}
 2\left( n - \frac{1}{2} \right)^{1/2} < A^{-1}_{n,1/2} < 2\left( n - \frac{1}{4} \right)^{1/2}.
\end{gather*}
Note that $2(n - \frac{1}{4})^{1/2} - 2(n - \frac{1}{2})^{1/2} < 1$ for $n \geq 2$, and it implies that there is at most one integer in the open interval from $2(n-\frac{1}{2})^{1/2}$ to $2(n-\frac{1}{4})^{1/2}$.
Suppose that there is an integer $h$ in the open interval, i.e.,
\[ 2\left(n - \frac{1}{2}\right)^{1/2} < h < 2\left(n - \frac{1}{4}\right)^{1/2} \ \ \text{or} \ \ \ 4n-2 < h^2 < 4n-1.
\]
There is, however, no integer in the open interval from $4n-2$ to $4n-1$, therefore such an integer $h$ does not exist. This gives the statement.
\end{proof}

We express this result in terms of $\zeta_n(s)$ using expression (\ref{eq:zetaAB}).

\begin{corollary}\label{cor_1/2}
For any positive integer $n$,
\begin{gather*}
\left[ \frac{1}{1-2^{1/2}} \zeta_n\left(\frac{1}{2}\right)^{-1}  \right]  = \left[(-1)^{n+1}  2\left(n-\frac{1}{2}\right)^{1/2}\right].
\end{gather*}
\end{corollary}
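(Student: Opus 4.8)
The plan is to convert the claimed identity into a statement about $A^{-1}_{n,1/2}$ via \eqref{eq:zetaAB}, and then to combine Theorem~\ref{thm_1/2} with the elementary reflection formula $\lfloor -x\rfloor = -\lfloor x\rfloor - 1$ valid for non-integer real $x$. From the very definitions one has $B_{n,s}=-A_{n,s}$ for every $n$, so \eqref{eq:zetaAB} can be rewritten as $\zeta_n(s) = \frac{(-1)^{n+1}}{1-2^{1-s}}A_{n,s}$; taking reciprocals and multiplying by $\frac{1}{1-2^{1-s}}$ gives, for every positive integer $n$ and $s=\tfrac{1}{2}$,
\[
\frac{1}{1-2^{1/2}}\,\zeta_n\!\left(\tfrac{1}{2}\right)^{-1} = (-1)^{n+1}\,A^{-1}_{n,1/2}.
\]
The same identity $A^{-1}_{n,s}=-B^{-1}_{n,s}$ lets me read Theorem~\ref{cor_AB} for all $n$, not only even ones, as $2(n-\tfrac{1}{2})^{1/2} < A^{-1}_{n,1/2} < 2(n-\tfrac{1}{4})^{1/2}$.

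Next I would record two facts. The number $2(n-\tfrac{1}{2})^{1/2}=\sqrt{4n-2}$ is irrational, because $4n-2\equiv 2\pmod{4}$ is never a perfect square; in particular it is not an integer. And the open interval $\bigl(2(n-\tfrac{1}{2})^{1/2},\,2(n-\tfrac{1}{4})^{1/2}\bigr)$ contains no integer: this is exactly what is shown inside the proof of Theorem~\ref{thm_1/2}, and that argument --- there is no integer strictly between $4n-2$ and $4n-1$, and the interval has length less than $1$ --- applies to $n=1$ as well. Since $A^{-1}_{n,1/2}$ lies strictly inside this integer-free interval, it is not an integer, and $\lfloor A^{-1}_{n,1/2}\rfloor = \lfloor 2(n-\tfrac{1}{2})^{1/2}\rfloor$.

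Now I split on the parity of $n$. If $n$ is odd, then $(-1)^{n+1}=1$ and the first display gives $\bigl[\tfrac{1}{1-2^{1/2}}\zeta_n(\tfrac{1}{2})^{-1}\bigr] = \lfloor A^{-1}_{n,1/2}\rfloor = \lfloor 2(n-\tfrac{1}{2})^{1/2}\rfloor = \bigl[(-1)^{n+1}2(n-\tfrac{1}{2})^{1/2}\bigr]$. If $n$ is even, then $(-1)^{n+1}=-1$, and I invoke $\lfloor -x\rfloor = -\lfloor x\rfloor-1$, applied to both $x=A^{-1}_{n,1/2}$ and $x=2(n-\tfrac{1}{2})^{1/2}$ (non-integers by the previous step), together with $\lfloor A^{-1}_{n,1/2}\rfloor = \lfloor 2(n-\tfrac{1}{2})^{1/2}\rfloor$:
\[
\left\lfloor -A^{-1}_{n,1/2}\right\rfloor = -\bigl\lfloor A^{-1}_{n,1/2}\bigr\rfloor - 1 = -\bigl\lfloor 2(n-\tfrac{1}{2})^{1/2}\bigr\rfloor - 1 = \left\lfloor -2(n-\tfrac{1}{2})^{1/2}\right\rfloor ,
\]
which, with the first display, is the even case. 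The two cases together give the corollary.

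I do not expect a genuine obstacle; the content is sign bookkeeping built on Theorem~\ref{thm_1/2}. The two spots that need care are the sign in the first display --- since $1-2^{1/2}<0$, the visible $(-1)^{n+1}$ really records whether \eqref{eq:zetaAB} expresses $\zeta_n$ through $A_{n,s}$ or through $B_{n,s}=-A_{n,s}$, and this must be tracked correctly --- and the non-integrality of $2(n-\tfrac{1}{2})^{1/2}$, which is what makes the reflection identity applicable in the even case; were $\sqrt{4n-2}$ ever an integer, the even case would need separate treatment.
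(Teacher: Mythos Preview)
Your argument is correct and follows the same route as the paper, which simply records the corollary as an immediate consequence of Theorem~\ref{thm_1/2} via expression~\eqref{eq:zetaAB}. Your version is in fact more careful than the paper's one-line justification: you make explicit the reflection identity $\lfloor -x\rfloor=-\lfloor x\rfloor-1$ for non-integer $x$ and the irrationality of $\sqrt{4n-2}$, which are exactly the ingredients needed to pass from $\lfloor A^{-1}_{n,1/2}\rfloor=\lfloor 2(n-\tfrac12)^{1/2}\rfloor$ to the negated statement, a step the paper leaves implicit.
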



We study secondly the value of the inverse of $\zeta_n(\frac{1}{3})$, where $\zeta_n(\frac{1}{3})$ is the tail of the Riemann zeta function from $n$ at $s=\frac{1}{3}$.

\begin{theorem}\label{thm_1/3}
	For any positive even number $n$,
	\begin{gather*}
	[A^{-1}_{n, 1/3}] = \left[2\left(n-\frac{1}{2}\right)^{1/3}\right]
	\end{gather*}
and for any positive odd number $n$,
\begin{gather*}
[B^{-1}_{n, 1/3}] = \left[-2\left(n-\frac{1}{2}\right)^{1/3}\right].
\end{gather*}
\end{theorem}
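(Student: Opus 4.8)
The plan is to imitate the proof of Theorem~\ref{thm_1/2}, replacing squaring by cubing, and then to deal with the one place where the analogy breaks down; throughout I treat even $n$, the odd case being symmetric. By Theorem~\ref{cor_AB} we have $2(n-\tfrac12)^{1/3}<A^{-1}_{n,1/3}<2(n-\tfrac14)^{1/3}$, and by the mean value theorem the length of this interval is at most $\tfrac16(n-\tfrac12)^{-2/3}<1$ for every even $n\ge 2$, so it contains at most one integer. If an integer $h$ lay in the open interval, cubing would give $8n-4<h^{3}<8n-2$, hence $h^{3}=8n-3$. Thus, whenever $8n-3$ is not a perfect cube, $[A^{-1}_{n,1/3}]=[2(n-\tfrac12)^{1/3}]$ exactly as in Theorem~\ref{thm_1/2}.

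The real difficulty is that, unlike the $s=\tfrac12$ case where the interval $(4n-2,4n-1)$ contained no integer at all, the integer $8n-3$ \emph{can} be a perfect cube: solving $8n-3=h^{3}$ with $n$ even forces $h\equiv5\pmod{16}$, so the offending values form the sparse family $n=512m^{3}+480m^{2}+150m+16$ ($m\ge0$), i.e.\ $n=16,1158,6332,\dots$, the smallest being $n=16$ with $8\cdot16-3=125=5^{3}$. For such $n$ the integer $h=(8n-3)^{1/3}$ lies in $\bigl(2(n-\tfrac12)^{1/3},2(n-\tfrac14)^{1/3}\bigr)$ and $[2(n-\tfrac12)^{1/3}]=h-1$, so the identity survives only if $A^{-1}_{n,1/3}<h$, i.e.\ $A_{n,1/3}>(8n-3)^{-1/3}$. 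The cleanest uniform fix is to sharpen Proposition~\ref{thm_0to1_upper} from $A^{-1}_{n,1/3}<2(n-\tfrac14)^{1/3}$ to the strict bound $A^{-1}_{n,1/3}<2(n-\tfrac38)^{1/3}$; since $2(n-\tfrac38)^{1/3}=(8n-3)^{1/3}$ whereas $2(n-\tfrac12)^{1/3}=(8n-4)^{1/3}$, this kills all exceptional cases at once, because an integer $m$ with $2(n-\tfrac12)^{1/3}<m\le A^{-1}_{n,1/3}$ would satisfy $8n-4<m^{3}<8n-3$, which is impossible.

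To prove $A_{n,1/3}>\frac{1}{2(n-3/8)^{1/3}}$ I would copy the mechanism of Proposition~\ref{thm_0to1_upper} with the offset $\tfrac38$ in place of $\tfrac14$: expand $\frac{1}{2(n-3/8)^{1/3}}=\sum_{k\ge n/2}\bigl(\frac{1}{2(2k-3/8)^{1/3}}-\frac{1}{2(2k+13/8)^{1/3}}\bigr)$, reduce to the termwise inequality $\frac{1}{2(2k-3/8)^{1/3}}-\frac{1}{2(2k+13/8)^{1/3}}<(2k)^{-1/3}-(2k+1)^{-1/3}$, set $f(x)=g(x)-g(x+\tfrac12)$ with $g(x)=(2x)^{-1/3}-\tfrac12\bigl((2x-\tfrac38)^{-1/3}+(2x+\tfrac58)^{-1/3}\bigr)$, and show $g$ is decreasing by using convexity of $t^{-4/3}$ to compare slopes at $2x-\tfrac38$ and $2x+\tfrac58$; this reduces to checking $\tfrac35<\bigl(\frac{2x-3/8}{2x+5/8}\bigr)^{7/3}$, which holds for $x\ge3$. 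I expect the main obstacle to be precisely the point that forced a separate check of $f(1)$ in Proposition~\ref{thm_0to1_upper}: here the slope comparison is too weak for $x=1$ and $x=2$, so the $k=1$ and $k=2$ cases of the termwise inequality must be verified directly, and they are genuinely tight (at $k=1$ the two sides differ by about $5\times10^{-4}$). Once those are confirmed the bound holds for all even $n\ge2$; at $n=16$ it reads $A_{16,1/3}>\frac{1}{2(15.625)^{1/3}}=\frac15$, so $A^{-1}_{16,1/3}<5$, and the first paragraph then yields the theorem in every case.
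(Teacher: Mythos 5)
Your proposal is correct and follows essentially the same route as the paper: reduce via Theorem~\ref{cor_AB} to the possible exceptional integer $h$ with $h^{3}=8n-3$, then eliminate it by sharpening the upper bound to $A^{-1}_{n,1/3}<2\left(n-\frac{3}{8}\right)^{1/3}$ using the same series decomposition, the same $g(x)$ with offsets $\frac{3}{8}$ and $\frac{5}{8}$, the same convexity/slope comparison leading to the ratio $\frac{3}{5}\left(\frac{2x+5/8}{2x-3/8}\right)^{7/3}<1$ for $x\ge 3$, and direct verification of the $k=1,2$ terms. Your explicit identification of the exceptional family $n=16,1158,6332,\dots$ where $8n-3$ is a perfect cube is a nice addition the paper omits, but it does not change the argument.
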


\begin{proof}
Let $n$ be a positive even number. By Theorem \ref{cor_AB}, we have that
\begin{gather*}
 2\left( n - \frac{1}{2} \right)^{1/3} < A^{-1}_{n,1/3} < 2\left( n - \frac{1}{4} \right)^{1/3}.
\end{gather*}
Note that $2(n - \frac{1}{4})^{1/3} - 2(n - \frac{1}{2})^{1/3} < 1$ for $n \geq 2$, and it implies that there is at most one integer in the open interval from $2(n-\frac{1}{2})^{1/3}$ to $2(n-\frac{1}{4})^{1/3}$. Suppose that there is an integer $h$ in the open interval, i.e.,
\[ 2(n - \frac{1}{2})^{1/3} < h < 2(n - \frac{1}{4})^{1/3} \ \ \text{or}\ \ 8n - 4 < h^3 < 8n - 2.
\]
This shows that the integer $h$ is of the form $h = 2(n-\frac{3}{8})^{1/3}$. If we show $A^{-1}_{n,1/3} < 2(n-\frac{3}{8})^{1/3}$ or equivalently, $\frac{1}{2(n-\frac{3}{8})^{1/3}} < A_{n,1/3}$, then our proof will be done. Let us rewrite 
\begin{align*}
  A_{n,1/3} & = \sum_{k=\frac{n}{2}}^{\infty} \left( \frac{1}{(2k)^{1/3}} - \frac{1}{(2k+1)^{1/3}} \right) \ \text{and} \\
  \frac{1}{2(n-\frac{3}{8})^{1/3}} & = \sum_{k=\frac{n}{2}}^{\infty} \left( \frac{1}{2(2k-\frac{3}{8})^{1/3}} - \frac{1}{2(2k+\frac{13}{8})^{1/3}} \right).
\end{align*}
Now it suffices to show that for any positive integer $k$,
\begin{gather*}
\frac{1}{2\left(2k - \frac{3}{8}\right)^{1/3}} - \frac{1}{2\left(2k + \frac{13}{8}\right)^{1/3}} < \frac{1}{(2k)^{1/3}} - \frac{1}{2\left(2k +1\right)^{1/3}}.
\end{gather*}
For this, we let
\[ f(x) = \left( \frac{1}{(2x)^{1/3}} - \frac{1}{(2x+1)^{1/3}} \right) - \left( \frac{1}{2(2x - \frac{3}{8})^{1/3}} - \frac{1}{2(2x+\frac{13}{8})^{1/3}} \right)
\]
and we will show that $f(x)$ is positive for any positive integer $x$.

We check that $f(1) = 0.00053\cdots $ and $f(2) = 0.00081\cdots$, so it suffices to show $f(x) > 0$ for $x \geq 3$.
With
\[ g(x) = \frac{1}{(2x)^{1/3}} - \left( \frac{1}{2(2x - \frac{3}{8})^{1/3}} + \frac{1}{2(2x + \frac{5}{8})^{1/3}} \right),
\]
we have that $f(x) = g(x) - g(x+\frac{1}{2})$,
so we only need to show that $g(x)$ is decreasing for $x \ge 3$. Consider the derivative of $g(x)$,
\begin{align*}
g'(x) & = \frac{1}{3} \left( -\frac{2}{(2x)^{4/3}} +  \frac{1}{\left(2x - \frac{3}{8}\right)^{4/3}} + \frac{1}{\left(2x + \frac{5}{8}\right)^{4/3}}  \right)  \\
      & = \frac{1}{3} \left(\left( \frac{1}{\left(2x -\frac{3}{8}\right)^{4/3}} - \frac{1}{(2x)^{4/3}} \right) - \left( \frac{1}{(2x)^{4/3}} - \frac{1}{\left(2x + \frac{5}{8}\right)^{4/3}} \right)\right) .
\end{align*}
Since $\frac{1}{x^{4/3}}$ is decreasing and convex, by comparing slopes at $(2x - \frac{3}{8})$ and $(2x + \frac{5}{8})$, we obtain
\begin{gather*}
\frac{1}{\left( 2x - \frac{3}{8} \right)^{4/3}} - \frac{1}{(2x)^{4/3}} < 2\cdot \frac{3}{16} \cdot \frac{4}{3} \cdot \frac{1}{\left(2x - \frac{3}{8}\right)^{7/3}}
\end{gather*}
and
\begin{gather*}
\frac{1}{(2x)^{4/3}} - \frac{1}{\left(2x + \frac{5}{8}\right)^{4/3}} > 2\cdot \frac{5}{16} \cdot \frac{4}{3} \cdot \frac{1}{\left(2x + \frac{5}{8}\right)^{7/3}}.
\end{gather*}
Therefore
\begin{gather*}
g'(x) < \frac{1}{18}\left( \frac{3}{\left(2x - \frac{3}{8}\right)^{7/3}} - \frac{5}{\left(2x + \frac{5}{8}\right)^{7/3}} \right).
\end{gather*}
Consider $h(x):= \frac{3}{5} \left( \frac{2x + 5/8}{2x - 3/8} \right)^{7/3}$ which is the ratio of two terms of the right-hand side of the above expression. We check that $h(x) < 1$ for $x \geq 3$ because $h(3) = 0.87\cdots$ and $\lim_{x \rightarrow \infty} h(x) = \frac{3}{5}$ and $h'(x) < 0$ for $x \geq 3$. Hence we obtain that $g'(x)$ is negative and so $g(x)$ is decreasing for $x \geq 3$ which proves the statement.
\end{proof}

We express this result in terms of $\zeta_n(s)$ using expression (\ref{eq:zetaAB}).

\begin{corollary}\label{cor_1/3}
  For any positive integer $n$,
	\begin{gather*}
	\left[ \frac{1}{1-2^{2/3}} \zeta_n\left(\frac{1}{3}\right)^{-1}  \right]  = \left[(-1)^{n+1} 2\left(n-\frac{1}{2}\right)^{1/3}\right].
	\end{gather*}
\end{corollary}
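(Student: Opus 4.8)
The plan is to obtain the corollary from Theorem~\ref{thm_1/3} by passing between $\zeta_n(\tfrac13)$ and the quantities $A_{n,1/3},B_{n,1/3}$ via identity~(\ref{eq:zetaAB}). The first step is the elementary reciprocal computation: since $1-2^{2/3}<0$ and $A_{n,1/3}>0$, for an even number $n$ identity~(\ref{eq:zetaAB}) gives
\begin{equation*}
\frac{1}{1-2^{2/3}}\,\zeta_n\!\left(\tfrac13\right)^{-1}
=\frac{1}{1-2^{2/3}}\left(-\frac{1}{1-2^{2/3}}\,A_{n,1/3}\right)^{-1}
=-A_{n,1/3}^{-1},
\end{equation*}
and the same manipulation for odd $n$ yields $\frac{1}{1-2^{2/3}}\zeta_n(\tfrac13)^{-1}=-B_{n,1/3}^{-1}$. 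Since $(-1)^{n+1}$ equals $-1$ for even $n$ and $+1$ for odd $n$, the asserted identity is equivalent to the two statements
\begin{equation*}
\left[-A_{n,1/3}^{-1}\right]=\left[-2\left(n-\tfrac12\right)^{1/3}\right]\quad(n\text{ even}),\qquad
\left[-B_{n,1/3}^{-1}\right]=\left[2\left(n-\tfrac12\right)^{1/3}\right]\quad(n\text{ odd}).
\end{equation*}

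The one genuine point is that the floor does not commute with negation: $[-y]=-[y]-1$ holds exactly when $y\notin\mathbb{Z}$, so the next step is to check the relevant non-integrality. First, $2(n-\tfrac12)^{1/3}=(8n-4)^{1/3}\notin\mathbb{Z}$, because if $8n-4=4(2n-1)$ were a cube $m^3$ then $4\mid m^3$ would force $2\mid m$, hence $8\mid m^3=8n-4$, which is impossible. Second, $A_{n,1/3}^{-1}\notin\mathbb{Z}$: by Theorem~\ref{cor_AB} any integer equal to $A_{n,1/3}^{-1}$ would lie in $\bigl(2(n-\tfrac12)^{1/3},2(n-\tfrac14)^{1/3}\bigr)$, and — as observed in the proof of Theorem~\ref{thm_1/3} — the only integer that can lie in that interval is $2(n-\tfrac38)^{1/3}$; but that same proof also shows $A_{n,1/3}^{-1}<2(n-\tfrac38)^{1/3}$, a contradiction. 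The analogous argument, using the (omitted) odd-$n$ counterpart of that proof, gives $B_{n,1/3}^{-1}\notin\mathbb{Z}$.

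With these non-integralities in hand the two displayed statements follow immediately. For even $n$, applying $[-y]=-[y]-1$ twice and Theorem~\ref{thm_1/3} in the middle,
\begin{equation*}
\left[-A_{n,1/3}^{-1}\right]=-\left[A_{n,1/3}^{-1}\right]-1=-\left[2\left(n-\tfrac12\right)^{1/3}\right]-1=\left[-2\left(n-\tfrac12\right)^{1/3}\right],
\end{equation*}
and for odd $n$ the same chain applied to $\left[B_{n,1/3}^{-1}\right]=\left[-2\left(n-\tfrac12\right)^{1/3}\right]$ gives $\left[-B_{n,1/3}^{-1}\right]=\left[2\left(n-\tfrac12\right)^{1/3}\right]$. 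I expect no conceptual difficulty here; the only thing to watch is that the cancellation $[-y]=-[y]-1$ is invoked solely for honest non-integers, which is exactly why one needs the refined bound $A_{n,1/3}^{-1}<2(n-\tfrac38)^{1/3}$ from the proof of Theorem~\ref{thm_1/3} (rather than just Theorem~\ref{cor_AB}) and, in the odd case, the sharpened bound on $\lvert B_{n,1/3}^{-1}\rvert$ from its omitted analogue.
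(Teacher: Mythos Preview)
Your proof is correct and follows the same route the paper takes: derive the corollary from Theorem~\ref{thm_1/3} via the identity~(\ref{eq:zetaAB}). In fact you are more careful than the paper's one-line justification, since you explicitly verify the non-integrality of $A_{n,1/3}^{-1}$ and of $2(n-\tfrac12)^{1/3}$ needed to pass the floor through the negation---a detail the paper glosses over but which is genuinely required for the implication $[x]=[y]\Rightarrow[-x]=[-y]$.
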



We study lastly the value of the inverse of $\zeta_n(\frac{1}{4})$, which is the tail of the Riemann zeta function from $n$ at $s=\frac{1}{4}$.

\begin{theorem}\label{thm_1/4}
 For any positive even number $n$,
 \begin{gather*}
  [A^{-1}_{n, 1/4}] = \left[2\left(n-\frac{1}{2}\right)^{1/4}\right]
 \end{gather*}
 and for any positive odd number $n$,
 \begin{gather*}
 [B^{-1}_{n, 1/4}] = \left[-2\left(n-\frac{1}{2}\right)^{1/4}\right].
 \end{gather*}
\end{theorem}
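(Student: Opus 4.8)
The plan is to mirror the proof of Theorem~\ref{thm_1/2}, replacing its closing elementary observation by a congruence modulo $16$. Let $n$ be a positive even number. Theorem~\ref{cor_AB} already supplies
\[
2\left(n-\tfrac{1}{2}\right)^{1/4} < A^{-1}_{n,1/4} < 2\left(n-\tfrac{1}{4}\right)^{1/4},
\]
and, since $2(n-\tfrac{1}{4})^{1/4}-2(n-\tfrac{1}{2})^{1/4}$ is decreasing in $n$ and already below $1$ at $n=2$, the open interval $\bigl(2(n-\tfrac{1}{2})^{1/4},\,2(n-\tfrac{1}{4})^{1/4}\bigr)$ contains at most one integer. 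Exactly as in Theorem~\ref{thm_1/2}, it then suffices to show that this interval contains \emph{no} integer, since that forces $[A^{-1}_{n,1/4}]=[2(n-\tfrac{1}{2})^{1/4}]$.

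Next I would argue by contradiction: if $h\in\mathbb{Z}$ satisfies $2(n-\tfrac{1}{2})^{1/4}<h<2(n-\tfrac{1}{4})^{1/4}$, then raising to the fourth power gives $16n-8<h^{4}<16n-4$, so that $h^{4}\in\{16n-7,\,16n-6,\,16n-5\}$. The decisive point is that every integer fourth power is congruent to $0$ or $1$ modulo $16$: indeed $(2j)^{4}=16j^{4}\equiv 0$, and since $(2j+1)^{2}\equiv 1\pmod{8}$ one also has $(2j+1)^{4}\equiv 1\pmod{16}$; whereas $16n-7$, $16n-6$ and $16n-5$ are congruent to $9$, $10$ and $11$ modulo $16$. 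Hence none of the three candidate values for $h^{4}$ is a fourth power, a contradiction, and no such $h$ exists, which proves the even case. The odd case for $B^{-1}_{n,1/4}$ follows from the symmetric argument, in accordance with the convention adopted after Proposition~\ref{pro_2(n-1)_2n}, and the result can then be restated in terms of $\zeta_n(\tfrac{1}{4})^{-1}$ via expression~(\ref{eq:zetaAB}).

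The step where I would anticipate difficulty is precisely the one that demanded real effort for $s=\tfrac{1}{3}$: there the window $(8n-4,8n-2)$ traps the integer $8n-3$, which genuinely can be a perfect cube (for instance $8\cdot 16-3=5^{3}$), and this forced Theorem~\ref{thm_1/3} into a term-by-term comparison of the two defining series together with a monotonicity analysis of an auxiliary function $g$. One naturally worries that an analogous comparison --- now against $2(n-\tfrac{7}{16})^{1/4}$, the smallest of the relevant thresholds $2(n-\tfrac{7}{16})^{1/4},\,2(n-\tfrac{3}{8})^{1/4},\,2(n-\tfrac{5}{16})^{1/4}$ --- will be needed here, and a quick check suggests such a comparison would not pass smoothly at the smallest index. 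The content of the proof is that none of this apparatus is necessary: the three integers trapped in $(16n-8,16n-4)$ lie in the residue classes $9$, $10$, $11$ modulo $16$, which fourth powers never occupy, so the troublesome case cannot occur. Once that congruence is noticed, the argument is no harder than Theorem~\ref{thm_1/2}, and noticing it is essentially the whole proof.
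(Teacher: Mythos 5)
Your proposal is correct and follows essentially the same route as the paper: the bounds from Theorem \ref{cor_AB}, the observation that the interval traps at most one integer, and the elimination of the candidates $16n-7$, $16n-6$, $16n-5$ via the fact that fourth powers are $\equiv 0$ or $1 \pmod{16}$. The only difference is that you spell out the verification of that congruence, which the paper merely asserts.
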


\begin{proof}
 Let $n$ be a positive even number. By Theorem \ref{cor_AB}, we have that
 \begin{gather*}
  2\left( n - \frac{1}{2} \right)^{1/4} < A^{-1}_{n,1/4} < 2\left( n - \frac{1}{4} \right)^{1/4}.
 \end{gather*}
 Note that $2(n - \frac{1}{4})^{1/4} - 2(n - \frac{1}{2})^{1/4} < 1$ for $n \geq 2$ and it implies that there is at most one integer in the open interval from $2(n-\frac{1}{2})^{1/4}$ to $2(n-\frac{1}{4})^{1/4}$. Suppose that there is an integer $h$ in the open interval, i.e.,
 \[ 2(n-\frac{1}{2})^{1/4} < h < 2(n - \frac{1}{4})^{1/4} \ \ \text{or} \ \ 16n - 8 < h^4 < 16n - 4.
 \]
 This shows that the integer $h^4$ is one of the form $16n - 7, 16n - 6$ or $16n - 5$. For any integer $h$, however $h^4 \equiv 0$ or $1 \pmod{16}$, hence such an integer $h$ does not exist. Therefore this gives the statement.
\end{proof}

We express this result in terms of $\zeta_n(s)$ using expression (\ref{eq:zetaAB}).

\begin{corollary}\label{cor_1/4}
  For any positive integer $n$,
	\begin{gather*}
	\left[ \frac{1}{1-2^{3/4}} \zeta_n\left(\frac{1}{4}\right)^{-1}  \right]  = \left[(-1)^{n+1} 2\left(n-\frac{1}{2}\right)^{1/4}\right].
	\end{gather*}
\end{corollary}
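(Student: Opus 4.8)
The plan is to convert the statement into one about $A^{-1}_{n,1/4}$ and $B^{-1}_{n,1/4}$ via (\ref{eq:zetaAB}) and then import Theorem \ref{thm_1/4}. For even $n$ the identity $\zeta_n(\frac14) = -\frac{1}{1-2^{3/4}}A_{n,1/4}$ gives, after taking reciprocals and multiplying by $\frac{1}{1-2^{3/4}}$, that $\frac{1}{1-2^{3/4}}\zeta_n(\frac14)^{-1} = -A^{-1}_{n,1/4}$; likewise $\frac{1}{1-2^{3/4}}\zeta_n(\frac14)^{-1} = -B^{-1}_{n,1/4}$ for odd $n$. Since $(-1)^{n+1}$ equals $-1$ for even $n$ and $1$ for odd $n$, the asserted equality is exactly $\left[-A^{-1}_{n,1/4}\right] = \left[-2(n-\frac12)^{1/4}\right]$ for even $n$ and $\left[-B^{-1}_{n,1/4}\right] = \left[2(n-\frac12)^{1/4}\right]$ for odd $n$.

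The second step is the elementary observation that for real numbers $x,y\notin\mathbb{Z}$ with $[x]=[y]$ one has $[-x]=[-y]$ (both equal $-[x]-1$), so it suffices to combine Theorem \ref{thm_1/4} with the fact that none of the numbers in question is an integer. That $2(n-\frac12)^{1/4}$ is never an integer follows because otherwise $16n-8$ would be a perfect fourth power, whereas $16n-8\equiv 8\pmod{16}$ and every fourth power is congruent to $0$ or $1$ modulo $16$ — the same congruence obstruction used in the proof of Theorem \ref{thm_1/4}. That $A^{-1}_{n,1/4}$ is not an integer (for even $n$) follows from Theorem \ref{cor_AB}: it lies in the open interval $\left(2(n-\frac12)^{1/4},\,2(n-\frac14)^{1/4}\right)$, so an integer value $h$ would satisfy $16n-8<h^4<16n-4$, i.e.\ $h^4\in\{16n-7,16n-6,16n-5\}$, which is again impossible modulo $16$; applying the same argument to $-B^{-1}_{n,1/4}$ handles the odd case.

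I expect no genuine obstacle: all the arithmetic content already lives in Theorem \ref{thm_1/4}, and the only point needing care is that the floor function does not commute with negation, which is exactly why the two non-integrality checks appear. Alternatively, one can bypass the negation remark and argue directly that (for even $n$) the number $-A^{-1}_{n,1/4}$ lies in $\left(-2(n-\frac14)^{1/4},\,-2(n-\frac12)^{1/4}\right)$, an interval of length less than $1$ (for $n\ge 2$) whose right endpoint is not an integer, so $\left[-A^{-1}_{n,1/4}\right]$ coincides with the integer part of that endpoint unless some integer $h$ lies strictly between them; but then $-h\in\left(2(n-\frac12)^{1/4},\,2(n-\frac14)^{1/4}\right)$, forcing $(-h)^4\in\{16n-7,16n-6,16n-5\}$, a contradiction. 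Combined with the analogous odd-$n$ computation, this completes the proof.
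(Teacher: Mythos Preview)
Your proof is correct and follows exactly the route the paper intends: apply expression~(\ref{eq:zetaAB}) to reduce the corollary to Theorem~\ref{thm_1/4}. The paper states this reduction as a one-line remark and does not address the fact that $[-x]=[-y]$ need not follow from $[x]=[y]$; you have correctly identified and closed this gap by verifying (via the same $h^4\equiv 0,1\pmod{16}$ obstruction used in Theorem~\ref{thm_1/4}) that neither $2(n-\tfrac12)^{1/4}$ nor $A^{-1}_{n,1/4}$ (resp.\ $-B^{-1}_{n,1/4}$) is an integer, so your argument is in fact more careful than the paper's.
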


We express the results of Theorems \ref{thm_1/2}, \ref{thm_1/3} and \ref{thm_1/4} in a single statement.

\begin{theorem}
  For $s = \frac{1}{2},\, \frac{1}{3}$, or $\frac{1}{4}$,
and for any positive even number $n$,
	\begin{gather*}
	[A^{-1}_{n, s}] = \left[2\left(n-\frac{1}{2}\right)^s\right]
	\end{gather*}
and for any positive odd number $n$,
\begin{gather*}
[B^{-1}_{n, s}] = \left[-2\left(n-\frac{1}{2}\right)^s\right].
\end{gather*}
\end{theorem}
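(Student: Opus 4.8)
The plan is to treat the three values $s=\tfrac12,\tfrac13,\tfrac14$ by a single scheme; of course the statement is nothing more than the conjunction of Theorems~\ref{thm_1/2}, \ref{thm_1/3} and \ref{thm_1/4}, but it is worth isolating why precisely these exponents fall into line. Write $s=1/m$ with $m\in\{2,3,4\}$ and let $n$ be a positive even number, the odd case being entirely parallel through $B_{n,s}$. First I would invoke Theorem~\ref{cor_AB} for the sandwich
\[
2\left(n-\tfrac12\right)^{1/m} < A^{-1}_{n,s} < 2\left(n-\tfrac14\right)^{1/m},
\]
and then note that $2\left(n-\tfrac14\right)^{1/m}-2\left(n-\tfrac12\right)^{1/m}<1$ for every $n\ge 2$, a one-line estimate from the concavity of $t\mapsto t^{1/m}$. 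Hence the open interval between the two bounds contains at most one integer, and the desired equality $[A^{-1}_{n,s}]=[2(n-\tfrac12)^{1/m}]$ holds as soon as that interval is shown to contain no integer at all.

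To test for an integer $h$ in the interval I would raise to the $m$-th power: such an $h$ exists iff $2^m n-2^{m-1}<h^m<2^m n-2^{m-2}$. For $m=2$ this is $4n-2<h^2<4n-1$, impossible since no integer lies strictly between two consecutive integers. For $m=4$ it is $16n-8<h^4<16n-4$, forcing $h^4\in\{16n-7,16n-6,16n-5\}$, i.e. $h^4\equiv 9,10,11\pmod{16}$, which is incompatible with $h^4\equiv 0$ or $1\pmod{16}$. In both cases no such $h$ exists and the proof is complete.

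The case $m=3$ is the obstacle, and the only one requiring real work. Here $8n-4<h^3<8n-2$ leaves one surviving candidate, $h^3=8n-3$, that is $h=2\left(n-\tfrac38\right)^{1/3}$, and no congruence rules it out. I would eliminate it by sharpening the upper bound of Theorem~\ref{cor_AB} at $s=\tfrac13$ to $A^{-1}_{n,1/3}<2\left(n-\tfrac38\right)^{1/3}$, equivalently $\tfrac{1}{2(n-3/8)^{1/3}}<A_{n,1/3}$. Writing both sides as series over $k\ge n/2$ reduces this to the termwise bound
\[
\frac{1}{2\left(2k-\tfrac38\right)^{1/3}}-\frac{1}{2\left(2k+\tfrac{13}{8}\right)^{1/3}} < \frac{1}{(2k)^{1/3}}-\frac{1}{(2k+1)^{1/3}},
\]
which I would prove by introducing $f(x)$ equal to the difference of the two sides (with $2x$ in place of $2k$), expressing $f(x)=g(x)-g(x+\tfrac12)$ for a suitable $g$, and showing $g'<0$ for large $x$ via a slope-comparison estimate on the decreasing convex function $t\mapsto t^{-4/3}$; the finitely many small $x$ are checked by direct computation. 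This step is delicate precisely because, unlike $m=2$ and $m=4$, there is no arithmetic obstruction and the bounds of Theorem~\ref{cor_AB} are by themselves not tight enough to pin down the floor.
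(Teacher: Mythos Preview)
Your proposal is correct and follows essentially the same approach as the paper: the combined theorem is simply the conjunction of Theorems~\ref{thm_1/2}, \ref{thm_1/3} and \ref{thm_1/4}, and for each $m\in\{2,3,4\}$ you reproduce the paper's argument exactly---the sandwich from Theorem~\ref{cor_AB}, the $m$-th-power test for an interior integer, the arithmetic/congruence obstruction for $m=2,4$, and for $m=3$ the sharpened upper bound $A^{-1}_{n,1/3}<2(n-\tfrac38)^{1/3}$ proved termwise via $f(x)=g(x)-g(x+\tfrac12)$ with $g'<0$ by slope comparison on $t^{-4/3}$ and direct checks at small $x$. Your unified presentation with $s=1/m$ is a cosmetic repackaging, but the substance, including the identification of $m=3$ as the only genuinely delicate case, matches the paper step for step.
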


We express the results of Corollary \ref{cor_1/2}, \ref{cor_1/3} and \ref{cor_1/4} in a single statement.

\begin{corollary}
  For any positive integer $n$ and $s = \frac{1}{2},\, \frac{1}{3}$, or $\frac{1}{4}$,
  \begin{gather*}
    \left[ \frac{1}{1-2^{1-s}} \zeta_n\left(s\right)^{-1}  \right]  = \left[(-1)^{n+1} 2\left(n-\frac{1}{2}\right)^{s}\right].
  \end{gather*}
\end{corollary}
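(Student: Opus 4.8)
The statement is nothing but the common shape of Corollaries~\ref{cor_1/2}, \ref{cor_1/3} and~\ref{cor_1/4}, one instance for each of $s=\tfrac12,\tfrac13,\tfrac14$, so the plan is to handle the three values of $s$ separately and, for each, deduce the equality from the matching Theorem~\ref{thm_1/2}, \ref{thm_1/3} or~\ref{thm_1/4} together with the identity~(\ref{eq:zetaAB}). Fix $s\in\{\tfrac12,\tfrac13,\tfrac14\}$. Since $1-2^{1-s}<0$ for $0<s<1$, identity~(\ref{eq:zetaAB}) gives $\frac{1}{1-2^{1-s}}\zeta_n(s)^{-1}=-A_{n,s}^{-1}$ when $n$ is even and $\frac{1}{1-2^{1-s}}\zeta_n(s)^{-1}=-B_{n,s}^{-1}$ when $n$ is odd. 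Because $(-1)^{n+1}=-1$ for even $n$ and $(-1)^{n+1}=+1$ for odd $n$, the claim reduces to showing $[-A_{n,s}^{-1}]=[-2(n-\tfrac12)^s]$ for even $n$ and $[-B_{n,s}^{-1}]=[2(n-\tfrac12)^s]$ for odd $n$.

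The only delicate point is that $[-x]$ need not equal $-[x]$, so these two equalities are not a formal consequence of Theorems~\ref{thm_1/2}--\ref{thm_1/4} alone; one needs to know the numbers in play are not integers. I would argue as follows for even $n$, the odd case being symmetric. By Theorem~\ref{cor_AB}, $2(n-\tfrac12)^s<A_{n,s}^{-1}<2(n-\tfrac14)^s$. The proofs of Theorems~\ref{thm_1/2}--\ref{thm_1/4} establish that no integer lies in the interval from $2(n-\tfrac12)^s$ up to $A_{n,s}^{-1}$ (for $s=\tfrac12$ and $s=\tfrac14$ this is the congruence step there: $4n-2$ is not a perfect square and none of $16n-8,\dots,16n-5$ is a perfect fourth power; for $s=\tfrac13$ the single candidate integer $2(n-\tfrac38)^{1/3}$ is discarded precisely by the sharper bound $A_{n,1/3}^{-1}<2(n-\tfrac38)^{1/3}$ proved there), and the same congruences show $2(n-\tfrac12)^s$ is itself not an integer. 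Hence $A_{n,s}^{-1}$ and $2(n-\tfrac12)^s$ lie in one and the same open unit interval $(k,k+1)$; negating, $-A_{n,s}^{-1}$ and $-2(n-\tfrac12)^s$ lie in $(-k-1,-k)$ and therefore have equal integer part, which is the even-$n$ claim. For odd $n$, Theorem~\ref{cor_AB} gives $2(n-\tfrac12)^s<-B_{n,s}^{-1}<2(n-\tfrac14)^s$, and the same ``no integer in $[2(n-\tfrac12)^s,-B_{n,s}^{-1}]$'' fact yields $[-B_{n,s}^{-1}]=[2(n-\tfrac12)^s]$ directly.

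I do not expect a genuine obstacle here: all the analytic content already sits in Theorem~\ref{cor_AB} and in Theorems~\ref{thm_1/2}--\ref{thm_1/4}, and for $s=\tfrac13$ in the extra estimate $A_{n,1/3}^{-1}<2(n-\tfrac38)^{1/3}$ proved in Theorem~\ref{thm_1/3}. The one thing to be careful about is the non-integrality/``same open unit interval'' bookkeeping above, i.e.\ resisting the temptation to write $[-x]=-[x]$; once that is in place the statement is exactly the merger of Corollaries~\ref{cor_1/2}, \ref{cor_1/3} and~\ref{cor_1/4}. If a fully self-contained write-up is wanted, one would re-derive, for each of the three exponents, the relevant ``no integer in the interval'' claim as a one-line congruence check rather than quoting it from inside the theorem proofs.
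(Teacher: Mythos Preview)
Your proposal is correct and follows the same route as the paper: the statement is literally introduced there as the merger of Corollaries~\ref{cor_1/2}, \ref{cor_1/3}, \ref{cor_1/4}, which in turn are obtained from Theorems~\ref{thm_1/2}, \ref{thm_1/3}, \ref{thm_1/4} via identity~(\ref{eq:zetaAB}); the paper gives no further argument.

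Your write-up is actually more careful than the paper on one point. The paper treats each corollary as an immediate reformulation of the matching theorem through~(\ref{eq:zetaAB}), without commenting on the passage from $[A_{n,s}^{-1}]=[2(n-\tfrac12)^s]$ to $[-A_{n,s}^{-1}]=[-2(n-\tfrac12)^s]$. You correctly flag that $[-x]=-[x]$ fails for non-integers and supply the missing observation: the arguments inside the proofs of Theorems~\ref{thm_1/2}--\ref{thm_1/4} (the congruence checks for $s=\tfrac12,\tfrac14$, and the sharper bound $A_{n,1/3}^{-1}<2(n-\tfrac38)^{1/3}$ for $s=\tfrac13$) together with Theorem~\ref{cor_AB} place $A_{n,s}^{-1}$ and $2(n-\tfrac12)^s$ in the same open unit interval, so their negatives also share a floor. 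That extra sentence is a genuine, if minor, improvement over the paper's bare ``use~(\ref{eq:zetaAB})''.
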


\section{Conclusion}
In this paper, we present the bounds of $A_{n,s}^{-1}$ and $B_{n,s}^{-1}$ hence the bounds of the inverses of tails of the Riemann zeta function,  $\zeta_n(s)^{-1}$, for $0<s<1$, and compute the values $\left[A_{n,s}^{-1}\right]$ and $\left[B_{n,s}^{-1}\right]$ hence the values of the inverses of tails of the Riemann zeta function, $\left[\frac{1}{1-2^{1-s}}\zeta_n(s) ^{-1}\right]$, for $s=\frac{1}{2},\, \frac{1}{3}$ and $\frac{1}{4}$.
For other values of $s$, for example $s=\frac{1}{5}$ or $\frac{2}{3} $, etc, the values of $A_{n,s}$ and $B_{n,s}$ don't seem to have simple expressions.

\section*{Authors' Contributions}
All authors equally contributed to the manuscript.
\section*{Acknowledgements}
The authors would like to express their thanks to the referees and the editors for their helpful comments and advice.



\end{document}